\documentclass[a4paper,11pt,reqno]{amsart}
\usepackage{a4wide}

\usepackage{amsmath,amssymb,amsthm,mathtools}

\usepackage[english]{babel}
\usepackage[colorlinks,citecolor=green,linkcolor=red]{hyperref}
\usepackage{amsthm}
\usepackage{color}
\usepackage{mathrsfs}
\usepackage{amsmath}
\usepackage{mathtools}
\usepackage{amssymb}
\usepackage{bm}
\usepackage{physics}
\usepackage{enumitem}
\usepackage{tikz-cd}
\usepackage{a4wide}
\usepackage{cleveref}
\usepackage{esint}
\usepackage{nicefrac}
\usepackage{stmaryrd}
\usepackage[margin=2cm]{geometry}
\usepackage{comment}
\usepackage{mathrsfs}
\usepackage{dutchcal}
\usepackage{xfrac}

\usepackage{yfonts}
\usepackage{stmaryrd}

\usepackage{listings}

\numberwithin{equation}{section}

\theoremstyle{plain}
\newtheorem{thm}{Theorem}[section]

\newtheorem{prop}[thm]{Proposition}

\newtheorem{lemma}[thm]{Lemma}
\newtheorem{theorem}[thm]{Theorem}
\newtheorem{corollary}[thm]{Corollary}

\theoremstyle{remark}

\newtheorem{rem}[thm]{Remark}

\theoremstyle{definition}
\newtheorem{defn}[thm]{Definition}

\DeclareMathOperator{\lip}{lip}
\DeclareMathOperator{\Ric}{Ric}

\newcommand{\de}{\ensuremath{\,\mathrm d}}

\let\oldchi=\chi
\renewcommand{\chi}{\text{\raisebox{\depth}{\(\oldchi\)}}}
\let\phi\varphi

\DeclareMathOperator{\dive}{div}
\DeclareMathOperator{\Lip}{Lip}

\DeclareMathOperator{\hess}{Hess}
\newcommand{\supp}{{\mathrm {supp\,}}}

\newcommand{\m}{\ensuremath{\mathfrak m}}
\newcommand{\mres}{\mathbin{\vrule height 1.6ex depth 0pt width 0.13ex\vrule height 0.13ex depth 0pt width 1.3ex}}

\newcommand{\Tan}{\mathrm {Tan}}

\newcommand{\RR}{\mathbb{R}}

\newcommand{\NN}{\mathbb{N}}

\newcommand{\LIP}{{\mathrm {LIP}}}

\newcommand{\RCD}{{\mathrm {RCD}}}

\def\Xint#1{\mathchoice
	{\XXint\displaystyle\textstyle{#1}}%
	{\XXint\textstyle\scriptstyle{#1}}%
	{\XXint\scriptstyle\scriptscriptstyle{#1}}%
	{\XXint\scriptscriptstyle\scriptscriptstyle{#1}}%
	\!\int}
\def\XXint#1#2#3{{\setbox0=\hbox{$#1{#2#3}{\int}$}
		\vcenter{\hbox{$#2#3$}}\kern-.5\wd0}}

\def\dashint{\Xint-}

\newcommand{\defeq}{\vcentcolon=}

\newcommand{\haus}{\mathscr{H}}
\newcommand{\X}{\mathsf{X}}

\renewcommand{\d}{{\rm d}}

\let\phi\varphi
\let\epsilon\varepsilon
\let\eps\varepsilon

\newcommand{\Test}{{\mathrm {Test}}}

\newcommand{\mass}{\m}

\author{Camillo Brena}
\address{\parbox{\linewidth}{ETH Z\"urich.\\
	R\"amistrasse 101,
    8092 Z\"urich, Switzerland\\[-4pt]\phantom{a}}}\email{camillo.brena@math.ethz.ch}
\author{Luca Gennaioli}
\address{\parbox{\linewidth}{Mathematics Institute, University of Warwick.\\ Coventry, CV4 7AL, United Kingdom\\[-4pt]\phantom{a}}}\email{luca.gennaioli@warwick.ac.uk}
\begin{document}
	\title[Dimension bound for unweighted RCD spaces]{Improvement of the dimension bound \\for unweighted RCD spaces}
	
\begin{abstract}
    We show that if $(\X,\d,\haus^n)$ is an $\RCD(K,N)$ space for some $K\in\RR$ and $N\in [1,\infty)$, then it is also a (non-collapsed) $\RCD(K,n)$ space. In other words, unweighted finite-dimensional $\RCD$ spaces enjoy an automatic improvement of the dimension bound to the Hausdorff dimension of the space. More generally, we prove that this holds also when $N=\infty$, provided that we assume in addition the $n$-rectifiability of the space.
\end{abstract}
	
	\maketitle
\setcounter{tocdepth}{4}

\section{Introduction}

The Curvature-Dimension condition, introduced in \cite{Lott-Villani09,Sturm06I,Sturm06II}, provides a synthetic formulation of lower bounds for the Ricci curvature and upper bounds for the dimension  on metric measure spaces. The curvature and dimension bounds should be considered jointly and may depend on each other. Moreover, it may  happen that the  dimension bound given by this condition  cannot match the intrinsic, geometric, dimension of the space.  
We begin by illustrating these phenomena on smooth spaces.

Let $(M^n,g,e^{-V}\haus^n)$ be a smooth, complete $n$-dimensional 
Riemannian manifold without boundary, endowed with weight $V\in C^\infty(M)$. 
We consider, for $N\in [1,\infty]$, the Bakry--Émery Ricci tensor 
\begin{equation}
\Ric_{N}\defeq
    \begin{cases}
        \Ric+\hess V-\frac{1}{N-n}\nabla V\otimes\nabla V\qquad&\text{if $N>n$},\\
        \Ric&\text{if $N=n$ and $V$ is constant},\\
        -\infty&\text{otherwise}.
    \end{cases}
\end{equation}
Notice that the borderline case $N=n$ corresponds to the degeneration of the first expression when $N\downarrow n$. In particular, it makes sense to split the cases where $V$ is constant or not. 

It is well known that $(M^n,g,e^{-V}\haus^n)$  satisfies the Riemannian Curvature-Dimension condition $\RCD(K,N)$ if and only if we have the lower bound $\Ric_N\ge Kg$. For background on  $\RCD$ spaces, we refer the reader to \cite{Villani2017, AmbICM, gigli2023giorgi} and references therein; here we only mention that the $\RCD(K,N)$ condition consists in coupling the Curvature-Dimension condition (with bounds $K$ and $N$) with the  infinitesimal Hilbertianity of the space (which means that its Sobolev space is a Hilbert space). 

As a particular instance of this characterization of the $\RCD(K,N)$ condition for smooth manifolds, we have the following two  complementary implications:
\begin{alignat}{5}
    &(M^n,g,e^{-V}\haus^n)&&\text{ is } \RCD(K,n)\qquad &&\Rightarrow\qquad&&\text{$V$ is constant},\label{impl1}\\
    &(M^n,g,\haus^n)&&\text{ is }\RCD(K,N)\qquad &&\Rightarrow\qquad&&\text{$(M^n,g,\haus^n)$  is $\RCD(K,n)$}. \label{impl2} 
\end{alignat}
To elaborate, \eqref{impl1} means that whenever the synthetic  dimension bound can be taken to be equal to the geometric dimension, then the weight must be constant. On the other hand, \eqref{impl2} means that, in the unweighted case, the synthetic  dimension bound can be taken to match the geometric one.
It is natural to ask whether \eqref{impl1} and  \eqref{impl2}  continue to hold when we replace the smooth  manifold $(M^n,g,e^{-V}\haus^n)$  with  a metric measure space $(\X,\d,\m)$. The generalization of  \eqref{impl1} to the nonsmooth setting was conjectured in \cite[Remark 1.11]{DPG17}  and later proved in \cite{H19,brena2021weakly}. On the other hand, the counterpart of \eqref{impl2} was conjectured in \cite[Conjecture 4.2]{Honda2020}.
The aim of the present paper is to give an affirmative answer to this question.

\begin{theorem}\label{main1}
    Let $(\X,\d,\haus^n)$ be an $\RCD(K, \infty)$ space. Assume that $(\X,\d,\haus^n) $ is $n$-rectifiable. 
    Then $(\X,\d,\haus^n)$ is  a non-collapsed $\RCD(K,n)$ space.
\end{theorem}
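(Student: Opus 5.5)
The plan is to run the Bakry--Émery machinery and upgrade the dimension bound in it from $\infty$ to $n$, via the pointwise trace identity $\Delta f=\operatorname{tr}\hess f$. Once this holds, the Cauchy--Schwarz inequality $|\hess f|_{HS}^2\ge \frac1n(\operatorname{tr}\hess f)^2$ turns the $\RCD(K,\infty)$ Bochner inequality (with its Hessian term, as in Gigli's second-order calculus) into the $\BE(K,n)$ inequality. From that I would conclude $\RCD(K,n)$ by the equivalence of Erbar--Kuwada--Sturm and Ambrosio--Mondino--Savaré. Non-collapsedness is then automatic, since the reference measure is $\haus^n$.

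The first step is to set up the tangent structure. By $n$-rectifiability together with $\m=\haus^n$, the density of $\m$ with respect to $\haus^n$ restricted to the rectifiable set equals $1$. I would use the structure theory of $\RCD(K,\infty)$ spaces (Mondino--Naber-type rectifiability, Gigli--Pasqualetto, Brué--Semola constancy of dimension) to show that the tangent module $L^0(T\X)$ has constant dimension $n$, matching the rectifiability dimension. Next I would fix test functions $f$ and recall the $\RCD(K,\infty)$ improved Bochner inequality
\[
\tfrac12\Delta|\nabla f|^2-\langle\nabla f,\nabla\Delta f\rangle\ge K|\nabla f|^2+|\hess f|_{HS}^2 .
\]
Once $\Delta f=\operatorname{tr}\hess f$ is established, the trace bound above gives $\BE(K,n)$.

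The key step, and the main obstacle, is proving $\Delta f=\operatorname{tr}\hess f$ $\m$-a.e. In the weighted smooth case this fails by the drift term $\langle\nabla V,\nabla f\rangle$. So the point is that unit density with respect to $\haus^n$ kills the drift. I would prove it by a blow-up argument. At $\m$-a.e.\ point $x$, rescalings of $\X$ converge to $\RR^n$ with $\haus^n$, and the density is $1$. The strategy is to show that, for a test $f$, the rescalings of $f-f(x)-\langle\nabla f,\cdot\rangle$ converge to a quadratic polynomial on $\RR^n$ with Hessian $\hess f(x)$, while the rescaled Laplacians converge to $\Delta f(x)$, using stability of Laplacians under pmGH convergence (Ambrosio--Honda). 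On $(\RR^n,\haus^n)$ the Laplacian of a quadratic form is its trace, which yields the identity at Lebesgue points.

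The delicate part is that $\Delta f$ is only $L^2$ and the second-order blow-up needs second-order differentiability a.e. I would try first to adapt the argument used for \eqref{impl1} in \cite{H19,brena2021weakly}, which handles exactly the comparison between $\Delta$ and $\operatorname{tr}\hess$ in the non-collapsed setting. I would run it in reverse, using the $\RCD(K,\infty)$ $\Gamma_2$ calculus together with a divergence-theorem argument. The measure $\haus^n$ should have zero "drift": the measure-valued divergence of harmonic-coordinate vector fields is controlled by $\nabla\log$ of the density, which vanishes since the density is identically $1$.

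Finally, from $\BE(K,n)$ together with infinitesimal Hilbertianity and the Sobolev-to-Lipschitz property (both inherited from $\RCD(K,\infty)$), I would conclude $\RCD(K,n)$. Since the reference measure is $\haus^n$, the space is non-collapsed.
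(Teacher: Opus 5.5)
\textbf{There is a genuine gap in the key step.} Your outer reduction matches the paper's (Proposition~\ref{trlapRCD}): the trace identity, the Hessian-improved Bochner inequality and $|\hess f|_{\mathrm{HS}}^2\ge(\tr\hess f)^2/n$ give the Bochner inequality with $N=n$. The gap is in proving $\tr\hess f=\Delta f$, and it is structural: a pointwise blow-up cannot detect the quantity you need to kill.

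The deficit $\mathscr{D}(V)=\tr\nabla V-\dive V$ is pointwise linear, $\mathscr{D}(V)=\sum_i (V\cdot V_i)\,\mathscr{D}(V_i)$ (Proposition~\ref{vfdssc}). So it is a drift term that depends only on $\nabla f(x)$.

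In a second-order blow-up you must subtract some first-order approximant $\ell$ of $f$, since there is no canonical $\langle\nabla f,\cdot\rangle$ on $\X$. The rescaled Laplacians then tend to $\Delta f(x)-\Delta\ell(x)$, and the rescaled Hessians to $\hess f(x)-\hess\ell(x)$. The Euclidean identity in the limit therefore only gives $\mathscr{D}(\nabla f)(x)=\mathscr{D}(\nabla\ell)(x)$, which is a tautology because $\nabla\ell(x)=\nabla f(x)$. Producing an $\ell$ with $\nabla\ell(x)=\nabla f(x)$ and $\Delta\ell(x)=\tr\hess\ell(x)$ is exactly the statement to be proved.

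Equivalently, every step of your outline applies verbatim to $(\RR^n,|\cdot|,e^{-V}\mathscr{L}^n)$. Its tangents are also $(\RR^n,c\,\mathscr{L}^n)$, yet there $\Delta f=\tr\hess f-\nabla V\cdot\nabla f$. A pointwise density $1$ versus $e^{-V(x)}$ disappears after normalization. What matters is how the density varies, and that information is lost under blow-up.

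Your fallback, ``running \cite{H19,brena2021weakly} in reverse'', is precisely the open conjecture. Also, ``$\nabla\log$ of the density'' has no meaning here, since in charts $\haus^n$ has only a measurable Jacobian.

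\textbf{The missing idea} is a global rigidity mechanism that uses the integrality of the density. The paper forms the current $T(f,\pi_1,\dots,\pi_n)=\int_E f\,d\pi_1\wedge\cdots\wedge d\pi_n\cdot\omega\,\de\haus^n$ from an orthonormal frame on $E$. By Proposition~\ref{fdcsc} and \eqref{partialT}, $\partial T$ splits into $\int_E\mathscr{D}(\alpha_\eta)\,\de\haus^n$ plus a term controlled by $|D\chi_E|$.

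Infinitesimal Hilbertianity makes the Ambrosio--Kirchheim area factor equal to $1$. Hence $|T|=\haus^n\mres E$ means multiplicity one, and $T$ is integral (Proposition~\ref{maincurrent}). Boundary rectifiability then makes $|\partial T|$ singular with respect to $\haus^n$. Since $|D\chi_E|\perp\haus^n$ too, this forces $\mathscr{D}=0$ on $E$.

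In the weighted example the multiplicity $e^{-V}$ is not an integer, and $\partial T$ genuinely has an absolutely continuous part. This is why the current argument separates the two cases while the blow-up does not.

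\textbf{Secondary issues.} Your first step cites Mondino--Naber, Bru\'e--Semola and Gigli--Pasqualetto, which are finite-dimensional results. For $N=\infty$, constancy of $\dim L^2(T\X)=n$ needs its own proof. The paper obtains it from Rajala's Poincar\'e inequality and Lipschitz differentiability spaces (Theorem~\ref{dimtan}). Likewise, pmGH blow-up compactness and stability of Laplacians are not available without doubling.
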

We remark that any $(\X,\d,\haus^n)$  $\RCD(K, N)$ space  with $N<\infty$ is $n$-rectifiable by \cite{Mondino-Naber14,bru2018constancy,  BruPasSem20}. In particular, the requirement that the space is $n$-rectifiable is not too restrictive, and would anyway be a consequence of the claim of the theorem. While our proof  uses the rectifiability of the space, it is not clear to us whether it is a necessary assumption or not (see Remark \ref{rectused}).
In any case, as a consequence of this discussion and Theorem \ref{main1}, we have the following unconditional result.
\begin{theorem}\label{main15}
    Let $(\X,\d,\haus^n)$ be an $\RCD(K, N)$ space with $N<\infty$.
    Then $(\X,\d,\haus^n)$ is  a non-collapsed $\RCD(K,n)$ space.
\end{theorem}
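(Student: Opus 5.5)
Theorem \ref{main15} will be deduced from Theorem \ref{main1}, which we are allowed to assume, together with the structure theory of finite-dimensional $\RCD$ spaces.

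\emph{Reduction.} First, an $\RCD(K,N)$ space with $N<\infty$ is also $\RCD(K,\infty)$, since the curvature-dimension condition is monotone in $N$ and infinitesimal Hilbertianity does not depend on $N$. Second, we need $(\X,\d,\haus^n)$ to be $n$-rectifiable. For this we invoke the structure theory of \cite{Mondino-Naber14,bru2018constancy,BruPasSem20}. It gives a unique essential dimension $k\in\NN$ with $k\le N$. Moreover, $\X$ is, up to a $\haus^n$-negligible set, covered by countably many Borel sets bi-Lipschitz to subsets of $\RR^k$, and $\haus^n\ll\haus^k$ on the regular set.

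\emph{Matching $k$ and $n$.} Take one of these charts with positive $\haus^n$ measure.
\begin{itemize}
\item If $k<n$, the chart has $\sigma$-finite $\haus^k$ measure, hence $\haus^n$ measure zero, which is a contradiction.
\item If $k>n$, the chart carries positive $\haus^k$ measure, because it is bi-Lipschitz to a set of positive $\mathcal L^k$ measure by the absolute continuity result of \cite{BruPasSem20}. Its $\haus^n$ measure would then be infinite. This contradicts the local finiteness of the reference measure $\haus^n$, which holds because $\haus^n$ is the reference measure of an $\RCD$ space and so is finite on bounded sets.
\end{itemize}
Hence $k=n$, and the space is $n$-rectifiable. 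Theorem \ref{main1} then applies and gives the conclusion.

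\emph{Main obstacle.} The only delicate point is this dimension-matching step. The cited rectifiability results are stated with charts for the essential dimension $k$, and we must ensure $k=n$. The measure argument above handles this. If the cited absolute continuity is only available for $\m=\haus^n$ against $\haus^k$, then the direction $k<n$ is the relevant one. The direction $k>n$ follows instead from the local finiteness of $\haus^n$ together with the fact that bi-Lipschitz images of positive-measure subsets of $\RR^k$ have infinite $\haus^n$ measure when $n<k$.
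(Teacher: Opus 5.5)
Your proposal is correct and follows the paper's route: $\RCD(K,N)$ implies $\RCD(K,\infty)$, the structure theory of \cite{Mondino-Naber14,bru2018constancy,BruPasSem20} gives $n$-rectifiability of $(\X,\d,\haus^n)$, and Theorem \ref{main1} then concludes. Your dimension-matching step only makes explicit why the essential dimension must equal $n$, which the paper leaves implicit in the citations. One small correction: absolute continuity of $\haus^n$ with respect to $\haus^k$ on the regular set is genuinely needed only in the case $k>n$, since for $k<n$ it holds automatically, so your closing paragraph mixes up the roles of the two cases.
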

We point out that the result above allows one to exploit the  theory developed for non-collapsed $\RCD$ spaces, providing us with a wide range of applications. For instance, we have the following low-dimensional consequence:
Any finite-dimensional $\RCD$ space endowed with $\haus^2$ satisfies synthetic sectional curvature bounds, see \cite{LySta}.

Theorems \ref{main1} and \ref{main15} follow immediately from Theorem \ref{main2} below, by \cite{Gigli14}, as remarked in \cite[Theorem 2.22]{brena2021weakly}. We repeat the argument in Proposition \ref{trlapRCD} below for the reader's convenience.

\begin{theorem}\label{main2}    Let $(\X,\d,\haus^n)$ be an $\RCD(K, \infty)$ space. Assume that $(\X,\d,\haus^n) $ is $n$-rectifiable. 
    Then, for every $u\in\Test(\X)$,
    \begin{equation}
        \tr\hess u=\Delta u\qquad\haus^n\text{-a.e.}
    \end{equation}
\end{theorem}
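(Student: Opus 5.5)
We will prove the identity by a blow-up (tangent cone) argument that uses the $n$-rectifiability together with the fact that the reference measure is exactly $\haus^n$. The key point is that, in an $\RCD(K,\infty)$ space, the defect $\Delta u-\tr\hess u$ is an $L^2$ (indeed $L^1_{loc}$) function. It is expressed through the divergence of the "weight" of the measure with respect to the Riemannian volume of the tangent structure. We have to show this weight is trivial when $\m=\haus^n$. Concretely, since the space is $n$-rectifiable, the tangent module $L^0(T\X)$ has constant dimension $n$. Hence $\tr\hess u=\langle \hess u, g\rangle$ with $g$ the pointwise identity of an $n$-dimensional fibre. The identity to prove is then equivalent to saying that, for every test vector field $X$ (e.g.\ $X=\nabla u$ or $X=f\nabla v$), one has $\dive X=\tr\nabla X$ $\haus^n$-a.e. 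Here $\nabla X$ is the covariant derivative, which exists for test fields by Gigli's second order calculus.

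\textbf{Step 1.} Define $\Phi(u)\defeq\Delta u-\tr\hess u$. This is a linear map from $\Test(\X)$ to $L^1_{loc}$, and we will show it is a first order operator, i.e.\ $\Phi(u)=\langle \nabla u, Z\rangle$ for a vector field $Z\in L^0(T\X)$. To do this we use the Leibniz rules $\Delta(fg)=f\Delta g+g\Delta f+2\langle\nabla f,\nabla g\rangle$ and $\hess(fg)=f\hess g+g\hess f+\d f\otimes\d g+\d g\otimes \d f$. Taking traces with $\tr(\d f\otimes \d g)=\langle\nabla f,\nabla g\rangle$, they give $\Phi(fg)=f\Phi(g)+g\Phi(f)$. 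Hence $\Phi$ is a derivation and is represented by some $Z$, by locality and the density of gradients of test functions in $L^0(T\X)$. It then suffices to show $Z=0$ a.e.

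\textbf{Step 2.} We identify $Z$ via blow-ups. At $\haus^n$-a.e.\ point $x$, rectifiability together with $\m=\haus^n$ gives density $\lim_r \haus^n(B_r(x))/\omega_nr^n=1$. The rescaled spaces $(\X,r^{-1}\d,\haus^n_{r^{-1}\d},x)$ then converge in pmGH sense to $(\RR^n,\d_e,\haus^n,0)$, with no weight appearing in the limit. We pick a point that is also a Lebesgue point of $|Z|$ and of $Z$ expressed in a harmonic or $\delta$-splitting chart. We then build functions $u_r$ (local splitting maps, or rescalings of $u$ composed with charts) whose gradients approximate a fixed direction $v$. Their second order quantities are small at scale $r$, by the Bochner-based $L^2$ Hessian estimates. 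Finally we pass to the limit in the integrated identity $\int\phi\,\Phi(u_r)\,\d\haus^n=\int\phi\langle\nabla u_r,Z\rangle$. The rescaled left side tends to $\int\phi(\Delta_{\RR^n}\ell-\tr\hess\ell)=0$ for the limiting linear function $\ell$. The rescaled right side should tend to $\langle v,Z(x)\rangle\int\phi$ times a nonvanishing normalization, which forces $Z(x)=0$.

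\textbf{Main obstacle.} The hardest step is the scaling in Step 2. $Z$ is order zero, whereas $\Delta$ and $\hess$ pick up a factor $r^{-1}$ under rescaling, so $Z$ appears at scale $r$ and the naive limit only sees it with a vanishing coefficient. To handle this, I would instead integrate $\Phi$ against $\phi$ on a ball and integrate by parts: $\int \phi\Phi(u)=-\int\langle\nabla\phi,\nabla u\rangle-\int\phi\tr\hess u$. The aim is a first-order identity characterizing $Z$ as $\nabla\log$ of the Radon–Nikodym density of $\m$ with respect to the "intrinsic volume" $\haus^n$ measured via the metric on the tangent module, in the spirit of \cite{H19,brena2021weakly}. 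Since $\m=\haus^n$, this density is identically $1$, and its gradient vanishes. The delicate point is to justify this density identification using rectifiability, which is where I expect most of the work.
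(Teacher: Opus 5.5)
\textbf{Step 1 is fine; Step 2 has a genuine gap.} Your Step 1 corresponds to the paper's Proposition \ref{vfdssc}. The deficit $\mathscr{D}(V)=\tr\nabla V-\dive V$ satisfies $\mathscr{D}(fV)=f\mathscr{D}(V)$. So on a set carrying an orthonormal frame $V_1,\dots,V_n$ (Gram--Schmidt applied to test gradients), $\mathscr{D}(V)=\sum_i (V\cdot V_i)\mathscr{D}(V_i)$, which is your $Z$.

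Step 2 carries the whole content of the theorem, and neither of your routes closes it. The blow-up route fails for the reason you yourself note. With $u_r=r^{-1}(u-u(x))$ on $(\X,r^{-1}\d)$ one gets $\Phi_r(u_r)=r\,\Phi(u)$, so the drift becomes $rZ$ and vanishes in every limit. This is structural, not technical: $(M,g,e^{-V}\haus^n)$ and $(M,g,\haus^n)$ have the same Euclidean tangents at every point (after normalising the measure), yet $Z=-\nabla V\neq 0$ in the first case. So no pointwise infinitesimal analysis can detect $Z$.

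Your fallback is circular. On a nonsmooth space there is no a priori ``intrinsic volume'' whose divergence is $\tr\nabla X$. Proving that $\haus^n$ is such a measure, i.e.\ that $\int\tr\nabla X\,\d\haus^n=0$ for suitable compactly supported $X$, \emph{is} the theorem. The works \cite{H19,brena2021weakly} go in the opposite direction: they show that $\tr\hess=\Delta$, i.e.\ the $\RCD(K,n)$ condition, forces $\m=c\,\haus^n$. They do not supply the implication you need. Separately, constant dimension $n$ of the tangent module is not automatic when $N=\infty$; the paper proves it in Theorem \ref{dimtan} via the Poincar\'e inequality and Lipschitz differentiability spaces.

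\textbf{The missing idea.} You need a mechanism tying $\haus^n$ to the measure-independent first-order calculus: $d$ and $\nabla$ do not see a weight, unlike $\dive$. The paper supplies this with Ambrosio--Kirchheim currents, as follows.
\begin{enumerate}
\item Work on a bounded finite-perimeter set $E$ where $\nabla u_1,\dots,\nabla u_n$ are uniformly non-degenerate, and set $\omega=V_1\wedge\cdots\wedge V_n$. Then $T(f,\pi_1,\dots,\pi_n)=\int_E f\,d\pi_1\wedge\cdots\wedge d\pi_n\cdot\omega\,\d\haus^n$ is a current with $|T|=\haus^n\mres E$.
\item Proposition \ref{fdcsc} gives $d(f\,d\pi_1\wedge\cdots\wedge d\pi_{n-1})=\tr(\nabla\alpha)\,\omega$ on $E$. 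Hence $\partial T(f,\pi_1,\dots,\pi_{n-1})=\int_E\mathscr{D}(\alpha)\,\d\haus^n+\int_E\dive\alpha\,\d\haus^n$. The last term is an integral against $|D\chi_E|$, so $T$ is normal.
\item Infinitesimal Hilbertianity makes Kirchheim's metric differentials Euclidean. So the area factor is $1$, $T$ has multiplicity one, and $T\in\mathbf{I}_n(\X)$.
\item By boundary rectifiability, $|\partial T|$ is concentrated on an $\haus^n$-null set. Since also $|D\chi_E|\perp\haus^n$, the absolutely continuous term must vanish, giving $\mathscr{D}(V_i)=0$ on $E$.
\end{enumerate}
This is exactly where $\m=\haus^n$ is used. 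With a weight $\rho$, one would get $|T|=\rho\,\haus^n\mres E$, which does not have integer multiplicity. Then $\partial T$ could carry an absolutely continuous part, the $d\rho$ term, which is precisely your $Z$.
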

We can rephrase the  results of the present paper  and  (some of the results) of \cite{bru2018constancy,H19, brena2021weakly} as follows.
\begin{corollary}
    Let $(\X,\d,\mass)$ be an $\RCD(K,N)$ space with $N<\infty$ and let $n\in\NN$ denote its essential dimension. Then, the following are equivalent.
    \begin{enumerate}
        \item $\mass= c\haus^n$ for some $c\in (0,\infty)$,
        \item $\tr\hess u=\Delta u\ \mass$-a.e.\ for every $u\in\Test(\X)$,
        \item $(\X,\d,\mass)$  is an $\RCD(K,n)$ space.
    \end{enumerate}
\end{corollary}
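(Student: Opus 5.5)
The plan is to prove the corollary by chaining the theorems of this paper with known results from the references. Only $(1)\Rightarrow(2)$ and $(1)\Rightarrow(3)$ are new. The remaining implications come from the cited literature.

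Throughout I use that multiplying the reference measure by a constant $c\in(0,\infty)$ changes nothing relevant. The $\RCD(K,N)$ condition is invariant under $\mass\mapsto c\,\mass$. Moreover the Cheeger energy, and hence $\Delta$, $\hess$ and the class $\Test(\X)$, only scale by the constant. So in $(1)$ we may assume $c=1$.

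\textbf{$(1)\Rightarrow(2)$ and $(1)\Rightarrow(3)$.} Assume $\mass=\haus^n$ and that $(\X,\d,\haus^n)$ is $\RCD(K,N)$ with $N<\infty$.
\begin{itemize}
\item By \cite{Mondino-Naber14,bru2018constancy,BruPasSem20} the space is $n$-rectifiable, where $n$ is the essential dimension.
\item Theorem \ref{main2} then gives $\tr\hess u=\Delta u$ a.e.\ for every $u\in\Test(\X)$, which is $(2)$.
\item Theorem \ref{main15} gives that the space is a non-collapsed $\RCD(K,n)$ space, which is $(3)$.
\end{itemize}
One should check that the exponent $n$ of the Hausdorff measure in $(1)$ coincides with the essential dimension. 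I would argue this as follows. $\haus^n$ must be locally finite and nonzero, since it is the reference measure of an $\RCD$ space. The space is rectifiable of dimension equal to the essential dimension $k$, so $\mass\ll\haus^k$ on the regular set and $\mass$ is concentrated there. Hence local finiteness and nontriviality of $\haus^n$ force $n=k$.

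\textbf{$(2)\Rightarrow(1)$.} This is precisely the characterization of weakly non-collapsed spaces in \cite{H19,brena2021weakly}. If $\tr\hess u=\Delta u$ for all test functions, then the density of $\mass$ with respect to $\haus^n$ has vanishing "drift". From this one concludes that $\mass=c\,\haus^n$.

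\textbf{$(3)\Rightarrow(1)$.} If $(\X,\d,\mass)$ is $\RCD(K,n)$ with $n$ equal to the essential dimension, then the space is weakly non-collapsed. By the results of \cite{bru2018constancy,H19,brena2021weakly} (the conjecture of \cite{DPG17}, the nonsmooth version of \eqref{impl1}), this gives $\mass=c\,\haus^n$.

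This closes the cycle $(1)\Leftrightarrow(2)$ and $(1)\Leftrightarrow(3)$. The only genuinely new input is Theorem \ref{main2}, via Theorem \ref{main15}. The one step needing some care is the matching of the exponent in $\haus^n$ with the essential dimension; I do not expect it to be a real obstacle.
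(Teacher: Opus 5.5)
Your proposal is correct and is essentially the argument the paper has in mind, since the corollary is presented there as a rephrasing: $(1)\Rightarrow(2)$ and $(1)\Rightarrow(3)$ come from rectifiability plus Theorems \ref{main2} and \ref{main15}, and the reverse implications are the results of \cite{H19,brena2021weakly} (here $\mass\ll\haus^n$ holds automatically when $n$ is the essential dimension, so an $\RCD(K,n)$ space is weakly non-collapsed). Two minor remarks: the exponent-matching check is unnecessary because the statement already fixes $n$ as the essential dimension, and under $\mass\mapsto c\,\mass$ the operators $\Delta$, $\hess$ and the class $\Test(\X)$ are invariant rather than rescaled (only the Cheeger energy scales), which is precisely why $(2)$ and $(3)$ are scale-invariant.
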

\begin{rem}\label{rectused}
In the course of the proof of Theorem \ref{main2}, the rectifiability assumption on     $(\X,\d,\haus^n)$ is used essentially only once, namely to apply Theorem \ref{dimtan}. Hence, it can be safely replaced by the assumption that $L^2(T\X)$ has constant dimension equal to $n$. Notice that $L^2(T\X)$ always has local dimension less than or equal to $n$ by \cite{ErikssonBique2024} or \cite{lucanicola}, so that the  issue is to show that the local dimension of $L^2(T\X)$ cannot be strictly smaller than $n$.

We also remark   that, if $T$ is the current defined in \eqref{defnt}, then we show in Proposition \ref{Tint} that $T\in \mathbf{N}_n(\X)$ without relying on  the rectifiability of $\X$. Hence, \cite[Theorem 8.7]{AmbrosioKirchheim00} implies that $E$ is automatically rectifiable   and then it is easy to obtain rectifiability of $\X$. In other words, the assumption that $L^2(T\X)$ has constant dimension equal to $n$ is itself enough to ensure rectifiability. We point out that this last fact is already known to experts, independently of our proof.
\end{rem}

Inspecting our argument, it is easy to realize that it is local in nature. In particular, we have the following corollary.
\begin{corollary}
Let $(\X,\d,\mass)$ be an $\RCD(K, \infty)$ space and let $U\subseteq\X$ be open. Assume that $\mass\,\mres U=\haus^n\,\mres U$ and that $(U,\d,\haus^n)$ is $n$-rectifiable.   Then, for every $u\in\Test(\X)$,
 \begin{equation}
        \tr\hess u=\Delta u\qquad\haus^n\text{-a.e.\ on $U$}.
    \end{equation}
Moreover, we have the following Bochner inequality on $U$. For any $f\in D(\Delta)$ with $\Delta f\in H^{1,2}(\X)$, 
        \begin{equation}  \label{bochner}
            \frac12 \int \Delta\varphi|Df|^2 \de \mass\ge \int \varphi\Big(\frac{(\Delta f)^2}{n}+\nabla f\,\cdot\,\nabla \Delta f+K|Df|^2\Big)\de\mass
        \end{equation}
        holds for any $\varphi\in D(\Delta)\cap L^\infty(\X)$ such that $\varphi\ge 0\ \mass$-a.e., $\Delta\varphi\in L^\infty$ and ${\rm dist}(\supp(\varphi), \X\setminus U)>0$.
\end{corollary}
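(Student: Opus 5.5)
The plan is to deduce the corollary from a localized version of the proof of Theorem \ref{main2}, and then to obtain the Bochner inequality \eqref{bochner} from the improved trace identity. The argument runs in three steps.

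\textbf{Step 1: trace identity on $U$.} I would rerun the proof of Theorem \ref{main2}, checking that every ingredient only uses information near the point. Fix $x\in U$ and $r>0$ with $B_{2r}(x)\subseteq U$, and multiply everything by a Lipschitz (or test) cutoff $\eta$ supported in $B_{2r}(x)$ with $\eta\equiv 1$ on $B_r(x)$.
\begin{itemize}
\item The current $T$ in \eqref{defnt} is built from $\haus^n$ and from an $n$-tuple of gradients of test functions. I would replace it by the localized current $T\mres\eta$.
\item Proposition \ref{Tint} (normality) is local: the boundary of $T\mres\eta$ picks up a term involving $\d\eta$, which is still of finite mass.
\item The rectifiability input, Theorem \ref{dimtan}, only requires $n$-rectifiability of $(U,\d,\haus^n)$. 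It then gives that $L^2(T\X)$ has dimension $n$ $\haus^n$-a.e.\ on $U$.
\item Hessians and Laplacians of test functions are defined globally in $\X$, and the identity $\tr\hess u=\Delta u$ is a pointwise a.e.\ statement. Hence it suffices to prove it on each such ball, where $\m=\haus^n$.
\end{itemize}
Covering $U$ by countably many such balls gives the identity $\haus^n$-a.e.\ on $U$.

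\textbf{Step 2: improved Bochner inequality.} I would start from the Bochner inequality with Hessian term, valid on $\RCD(K,\infty)$ spaces by \cite{Gigli14}:
\[
\tfrac12\Delta|Df|^2\ge |\hess f|_{HS}^2+\nabla f\cdot\nabla\Delta f+K|Df|^2
\]
in the distributional sense, first for $f\in\Test(\X)$. On $U$ the dimension of the tangent module is $n$, so Cauchy--Schwarz in the $n$-dimensional fibres gives $|\hess f|_{HS}^2\ge(\tr\hess f)^2/n$. By Step 1, $(\tr\hess f)^2/n=(\Delta f)^2/n$ $\m$-a.e.\ on $U$.

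Now test against $\varphi\ge0$ with $\Delta\varphi\in L^\infty$ and ${\rm dist}(\supp\varphi,\X\setminus U)>0$. Since $\varphi$ vanishes off $U$, the pointwise improvement is only needed on $\supp\varphi\subseteq U$, and this yields \eqref{bochner} for test $f$. The positive distance assumption is what makes the localized objects of Step 1 (for instance cutoffs adapted to $\supp\varphi$) legitimate.

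\textbf{Step 3: extension to general $f$.} Finally I would extend \eqref{bochner} from test functions to all $f\in D(\Delta)$ with $\Delta f\in H^{1,2}$. The plan is to approximate $f$ by the heat flow $h_t f$, possibly after truncation, using the standard density argument as in the proof of Proposition \ref{trlapRCD}. Under this approximation $|Dh_tf|^2\to|Df|^2$ in $L^1$ and $\Delta h_tf\to\Delta f$ in $H^{1,2}$. Each term of \eqref{bochner} passes to the limit because $\varphi,\Delta\varphi\in L^\infty$.

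The main obstacle is Step 1: confirming that the proof of Theorem \ref{main2}, and in particular the normality and boundary computations for $T$, survives localization. The only genuinely new terms are those involving $\d\eta$. I would handle them by showing they are of finite mass and disappear on $\{\eta=1\}$.
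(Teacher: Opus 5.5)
\textbf{Verdict: there is a genuine gap in Step 1, in how you localize.} Steps 2 and 3 are fine: they are exactly the argument of Proposition \ref{trlapRCD}, used with test functions $\varphi$ that vanish $\m$-a.e.\ off $U$. That vanishing is all the distance assumption is needed for; the cutoffs you mention play no role there.

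The mechanism of the proof of Theorem \ref{intermediate} is that $T$ is an \emph{integral} current. Integrality is what lets you apply Theorem \ref{boundrect}, which gives that $|\partial T|$ is concentrated on an $\haus^n$-null set. That in turn kills the deficit term in \eqref{partialT}. Integrality comes from Proposition \ref{maincurrent}, which needs $|T|=\theta\haus^n$ with $\theta$ integer-valued.

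If you replace $T$ by $T\mres\eta$ with a Lipschitz cutoff $\eta$, then $|T\mres\eta|=\eta\,\haus^n\mres E$. Its density is non-integer on $E\cap\{0<\eta<1\}$, and that set has positive measure precisely in the situation where you need the cutoff. So $T\mres\eta\notin\mathbf{I}_n(\X)$, and Theorem \ref{boundrect} cannot be invoked. You only check normality of the localized current and never integrality, so this failure is invisible in your plan.

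The extra boundary term $-T\mres d\eta$ makes things worse. Its mass is bounded by $\Lip(\eta)\,\haus^n\mres E$, which is absolutely continuous with respect to $\haus^n$, i.e.\ exactly the kind of contribution the argument must exclude. Showing it has finite mass does not help.

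The starting object is also wrong. The global $T$ with $E\not\subseteq U$ should not be used at all. Outside $U$ one has $\m\neq\haus^n$, and none of the following refers to $\haus^n$ there: the constant dimension of $L^2(T\X)$, the frame of Proposition \ref{GS}, the integration by parts against $|D\chi_E|$, or Lemma \ref{persing}.

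\textbf{How to fix it.} Localize with a characteristic function, i.e.\ shrink the set of finite perimeter instead of multiplying by $\eta$.
\begin{enumerate}
\item Cover $U$ by countably many balls $B_r(x)$ with $B_{2r}(x)\subseteq U$. By coarea, $B_r(x)$ has finite perimeter for a.e.\ $r$.
\item In the proof of Theorem \ref{main2}, replace each $E_k$ by $E_k\cap B_r(x)$. This set is bounded, of finite perimeter, and contained in $U$.
\item Theorem \ref{intermediate} then runs verbatim. On $E$ one has $\m\mres E=\haus^n\mres E$, and $|T|=\haus^n\mres E$ has density $1$.
\item Proposition \ref{maincurrent} and Theorem \ref{dimtan} only use charts on $U$ and the Poincaré inequality of \cite{Rajala12-2} at small scales. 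That inequality is stated for $\m$, which equals $\haus^n$ on small balls centred in $U$.
\item Lemma \ref{persing} gives $|D\chi_E|\perp\m=\haus^n$ on $U$, and no $d\eta$ terms ever appear.
\end{enumerate}
This is the sense in which the paper calls the argument local.
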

\subsection*{Acknowledgments}
L.G.\ is supported by UK Research and Innovation (UKRI) under the Horizon Europe funding guarantee [grant number EP/Z000297/1].
The authors warmly thank Prof.\ L.\ Ambrosio, Prof.\ S.\ Honda and Prof.\ D.\ Semola  for helpful comments on an earlier draft of this manuscript.

\section{Preliminaries}
We now recall some preliminary notions along with some  new results that will be useful in the course of the proof.

\subsection{$\RCD$ spaces}
A metric measure space is a triplet $(\X,\d,\mass)$, where $(\X,\d)$ is a complete and separable metric space and $\mass$ is a Borel measure which is finite on bounded sets. For the general literature on $\RCD$ spaces we refer the reader to \cite{Villani2017, AmbICM, gigli2023giorgi}. 

We assume that the reader is familiar with the calculus developed in \cite{Gigli14}, see also \cite{GP19}; in particular, we make use of the  various function spaces defined there. We recall here the notions that will be used most frequently.

\begin{defn}
    Let $K\in\RR$ and $N\in [1,\infty]$. A metric measure space $(\X,\d,\mass)$ is said to be an $\RCD(K,N)$ space if the following conditions are satisfied.
    \begin{enumerate}
        \item For some $x\in\X$, there exists $C>1$ such that $\mass(B_r(x))\le C e^{Cr^2}$ for every $r>0$.
        \item $f\in L^2(\m)\mapsto \int |Df|^2\de\mass$ is a quadratic form. 
        \item Any $f\in H^{1,2}(\X)$ with $|Df|\le 1\ \mass$-a.e.\ has a $1$-Lipschitz representative.
        \item For any $f\in D(\Delta)$ with $\Delta f\in H^{1,2}(\X)$, 
        \begin{equation}  \label{bochner}
            \frac12 \int \Delta\varphi|Df|^2 \de \mass\ge \int \varphi\Big(\frac{(\Delta f)^2}{N}+\nabla f\,\cdot\,\nabla \Delta f+K|Df|^2\Big)\de\mass
        \end{equation}
        holds for any $\varphi\in D(\Delta)\cap L^\infty(\X)$ with $\varphi\ge 0\ \mass$-a.e.\ and $\Delta\varphi\in L^\infty$.
    \end{enumerate}
\end{defn}
As already remarked, the following result is well known. Nevertheless, we include the proof for the reader's convenience.
\begin{prop}\label{trlapRCD}
    Let $(\X,\d,\mass)$ be an $\RCD(K,\infty)$ space. Assume that $L^2(T\X)$ has constant dimension $n\in\NN$ and that for every $u\in\Test(\X)$, $\tr\hess u=\Delta u\ \mass$-a.e.
    Then,  $(\X,\d,\mass)$ is an $\RCD(K,n)$ space.
\end{prop}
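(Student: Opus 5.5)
The plan is to improve Gigli's self-improved Bochner inequality from \cite{Gigli14} by replacing the Hilbert--Schmidt norm of the Hessian with its trace part, and then use $\tr\hess u=\Delta u$ to recover the term $(\Delta f)^2/n$. On an $\RCD(K,\infty)$ space, \cite{Gigli14} provides, for every $f\in\Test(\X)$, the measure-valued Bochner inequality
\begin{equation*}
\mathbf{\Delta}\frac{|Df|^2}{2}\ge \Big(|\hess f|_{\rm HS}^2+\nabla f\cdot\nabla\Delta f+K|Df|^2\Big)\mass .
\end{equation*}
Here $\mathbf\Delta$ is the measure-valued Laplacian. Testing it against $\varphi\ge0$ with $\varphi\in D(\Delta)\cap L^\infty$ and $\Delta\varphi\in L^\infty$ yields
\begin{equation*}
\frac12\int\Delta\varphi\,|Df|^2\de\mass\ge\int\varphi\Big(|\hess f|_{\rm HS}^2+\nabla f\cdot\nabla\Delta f+K|Df|^2\Big)\de\mass .
\end{equation*}

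The second step is pointwise linear algebra. Since $L^2(T\X)$ has constant dimension $n$, we can fix a local orthonormal frame on a Borel partition of $\X$. On each piece, $\hess f$ is represented $\mass$-a.e.\ by a symmetric $n\times n$ matrix. Cauchy--Schwarz then gives $|\hess f|_{\rm HS}^2\ge (\tr\hess f)^2/n$ $\mass$-a.e. By hypothesis $\tr\hess f=\Delta f$ $\mass$-a.e., so $|\hess f|_{\rm HS}^2\ge(\Delta f)^2/n$. Inserting this into the integrated inequality gives \eqref{bochner} with $N=n$ for all $f\in\Test(\X)$.

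The remaining step, which I expect to be the main technical point, is to pass from test functions to all $f\in D(\Delta)$ with $\Delta f\in H^{1,2}(\X)$. I would approximate such an $f$ by test functions, for instance $f_t=h_t f$ via the heat flow, combined with a truncation or mollification argument in the style of \cite{Gigli14} to stay in $\Test(\X)$. The required convergences are $f_t\to f$ in $H^{1,2}$, $\Delta f_t\to\Delta f$ in $L^2$, and $\nabla\Delta f_t\to\nabla\Delta f$ in $L^2$. These hold because $\Delta h_t f=h_t\Delta f$ and the heat flow is continuous in $H^{1,2}$. Since $\varphi$ and $\Delta\varphi$ are bounded, every term in \eqref{bochner} passes to the limit: $|Df_t|^2\to|Df|^2$ in $L^1$, $(\Delta f_t)^2\to(\Delta f)^2$ in $L^1$, and the cross term converges as a product of $L^2$-convergent factors.

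Conditions (1)--(3) of the definition are part of the $\RCD(K,\infty)$ assumption and do not depend on $N$. Together with the inequality just established, this shows that $(\X,\d,\mass)$ is an $\RCD(K,n)$ space.

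If one prefers to avoid the approximation step, one can instead note that the Bochner inequality with $N=n$ for test functions already characterizes $\RCD(K,n)$ via the Erbar--Kuwada--Sturm / Ambrosio--Gigli--Savaré equivalence. The route above is more self-contained, however.
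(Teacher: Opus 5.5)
Your proposal is correct and follows essentially the same route as the paper: Gigli's Hessian-improved Bochner inequality (using that the singular part of $\mathbf{\Gamma}_2$ is nonnegative), then the pointwise bound $|\hess f|_{\mathrm{HS}}^2\ge(\tr\hess f)^2/n$ combined with $\tr\hess f=\Delta f$, then heat-flow approximation in $f$. The one step the paper makes explicit and you compress into ``testing'' is the passage from Lipschitz, boundedly supported $\varphi$ to $\varphi\in D(\Delta)\cap L^\infty$ with $\Delta\varphi\in L^\infty$: the paper does this by approximating $\varphi$ in $H^{1,2}\cap L^\infty$ and then integrating by parts against $|\nabla f|^2\in H^{1,2}$.
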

\begin{proof}
    We know that \eqref{bochner} holds with $N=\infty$ and we have to show that it holds for $N=n$. Let $f\in\Test(\X)$. Also, let   $\varphi$  be Lipschitz with bounded support and satisfying $\varphi\ge 0\ \mass$-a.e. 
    By \cite[Theorem 3.3.8]{Gigli14} and the fact that the singular part of $\mathbf{\Gamma}_2$ is non-negative (see, e.g., \cite[Equation (3.1.10)]{Gigli14}, derived from \cite{Savare13}) we have that
    \begin{equation}
            -\frac12 \int \nabla\varphi \,\cdot\,\nabla|\nabla f|^2  \de \mass\ge \int \varphi\big(|\hess f|_{\mathrm{HS}}^2+\nabla f\,\cdot\,\nabla \Delta f+K|Df|^2\big)\de\mass.
    \end{equation}
    This means that we have improved \eqref{bochner} by adding the non-negative term $\varphi|\hess f|_{\mathrm{HS}}^2$ on the right-hand side. By Cauchy--Schwarz,
    \begin{equation}
        |\hess f|_{\mathrm{HS}}^2\ge \frac{(\tr\hess f)^2}{n}\qquad\mass\text{-a.e.},
    \end{equation}
    so that, recalling the  assumption $\tr\hess f=\Delta f\ \mass$-a.e., we have that 
\begin{equation}
            -\frac12 \int \nabla\varphi \,\cdot\,\nabla|\nabla f|^2  \de \mass\ge \int \varphi\Big(\frac{(\Delta f)^2}{n}+\nabla f\,\cdot\,\nabla \Delta f+K|Df|^2\Big)\de\mass.
    \end{equation}
    By approximation, the above continues to hold for every $\varphi \in H^{1,2}(\X)\cap L^\infty(\X)$ with $\varphi\ge 0\ \mass$-a.e. If moreover $\varphi\in D(\Delta)$, 
    \begin{equation}
            \frac12 \int \Delta\varphi |\nabla f|^2  \de \mass\ge \int \varphi\Big(\frac{(\Delta f)^2}{n}+\nabla f\,\cdot\,\nabla \Delta f+K|Df|^2\Big)\de\mass.
    \end{equation}
   The claim then follows by approximation on $f$ via heat flow, see  the proof of \cite[Corollary 3.3.9]{Gigli14}.
\end{proof}
\subsection{Calculus on $\RCD$ spaces}
For future reference, we state some results concerning calculus on $\RCD$ spaces. We remark that the relevant material about function spaces is taken from \cite{Gigli14}. In particular, $H$ spaces consist of the closures, with respect to the relevant norms, of test objects. For instance, $H^{1,2}_C(T\X)$ is the closure of test vector fields with respect to 
\begin{equation}
    X\mapsto \Big(\int |X|^2+|\nabla X|^2\de\mass\Big)^{1/2},
\end{equation} and $H^{1,2}_H(T\X)$ is the closure of test covector fields with respect to 
\begin{equation}
    X\mapsto \Big(\int |X|^2+|d X|^2+|\delta X|^2\de\mass\Big)^{1/2}.
\end{equation}
In this note, we are going to use the Riesz isomorphism to identify vector fields and covector fields. We will then identify also the exterior powers $\Lambda^kT \X$ and $\Lambda^k T^* \X$ throughout.

\begin{lemma}
    [{\cite[Lemma 4.3]{BGBV}}]\label{remaininH}
    Let $(\X,\d,\mass)$ be an $\RCD(K,\infty)$ space. Let $X\in H^{1,2}_H(T\X)\cap L^\infty(T\X)$ and let $f\in S^2(\X)\cap L^\infty(\X)$. Then $fX \in H^{1,2}_H(T\X)\cap L^\infty(T\X)$. 
\end{lemma}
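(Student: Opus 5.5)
The plan is to approximate $fX$ in the $H^{1,2}_H$ norm by test covector fields. By definition of $H^{1,2}_H(T\X)$, we can pick test covector fields $X_k$ with
\[
X_k\to X\quad\text{in the norm }\Big(\int |X|^2+|dX|^2+|\delta X|^2\de\mass\Big)^{1/2}.
\]
We may also assume $\sup_k\|X_k\|_{L^\infty}<\infty$. To arrange this, compose with a bounded cutoff of the norm. Alternatively, regularize via the heat flow on forms (the Hodge heat flow), which preserves $L^\infty$ bounds up to $e^{-Kt}$ by the Bakry--Émery contraction $|h_{H,t}X|\le e^{-Kt}h_t|X|$. It also converges in $H^{1,2}_H$ for data in $H^{1,2}_H$.

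Similarly, we approximate $f$ by test functions $f_j$ that are uniformly bounded in $L^\infty$, with $|Df_j|\to|Df|$ in $L^2$ and $f_j\to f$ $\mass$-a.e. Here we use the heat flow $h_tf$, possibly after truncation and multiplication by Lipschitz cutoffs if $f\in S^2$ is not in $L^2$. On a possibly infinite-measure space, it is convenient to first reduce to $f$ with bounded support or with $f\in H^{1,2}$. We do this by noting that $X\in L^2$ lets us multiply by cutoffs $\chi_R$ with $|\nabla\chi_R|\le 1/R$: the error $\nabla\chi_R\wedge X$ and $\nabla\chi_R\cdot X$ tends to zero in $L^2$.

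For test objects, the Leibniz rules $d(gY)=dg\wedge Y+g\,dY$ and $\delta(gY)=g\,\delta Y-\nabla g\cdot Y$ hold, and $f_jX_k$ is again a test form. It therefore suffices to show that these expressions converge in $L^2$. The product terms $f_j\,dX_k$ and $f_j\,\delta X_k$ converge by dominated convergence together with the uniform $L^\infty$ bound on $f_j$. The terms $df_j\wedge X_k$ and $\nabla f_j\cdot X_k$ converge by the uniform $L^\infty$ bound on $X_k$ and $L^2$ convergence of $df_j$ (up to a diagonal argument and a.e.\ convergence of $X_k$).

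This shows that $f_jX_k$ is Cauchy in the $H^{1,2}_H$ norm along a diagonal sequence, with $L^2$ limit $fX$. Closedness of $d$ and $\delta$ then gives $fX\in H^{1,2}_H(T\X)$, and the $L^\infty$ bound is trivial.

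The main obstacle is producing approximating test forms with uniform $L^\infty$ bounds that still converge in $H^{1,2}_H$. The first thing I would try is the Hodge heat flow, using the contraction estimate above. If the Hodge heat flow route is delicate, a fallback is to use only weak convergence: boundedness in $H^{1,2}_H$ of $f_jX_k$ plus $L^2$ convergence gives membership in the closed (hence weakly closed) subspace $H^{1,2}_H$. In that version it is enough to bound $\|df_j\wedge X_k\|_{L^2}$, which still needs $X_k$ bounded, or else the convergence $|X_k|^2\to|X|^2$ in $L^1$ combined with $f_j$ Lipschitz. So the uniform bound remains the key step in either approach.
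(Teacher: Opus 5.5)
There is a genuine gap: your argument rests on test forms $X_k\to X$ in $H^{1,2}_H(T\X)$ with $\sup_k\|X_k\|_{L^\infty}<\infty$, and you never produce them. Neither device you suggest yields test forms. The Hodge heat flow gives ${\rm h}_{H,t}X\in D(\Delta_H)\cap L^\infty(T\X)$, which is in general not a finite sum $\sum_i g_i\,dh_i$ with $g_i,h_i\in\Test(\X)$. For such $X_k$, your claim that $f_jX_k$ is again a test form is false, so the Leibniz rules you invoke are not available. Indeed, proving $f_jX_k\in H^{1,2}_H(T\X)$ with those rules is itself an instance of the lemma. The same objection applies to $\psi(|X_k|)X_k$, and there you would at best get boundedness, not convergence, in $H^{1,2}_H$. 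Your closing sentence, that the uniform bound remains the key step in either approach, confirms that the proof as written depends on this unproved step.

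The missing observation is that this uniform bound is not needed. Test functions have bounded differential, so the two limits can be taken one at a time.

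First, fix $g\in\Test(\X)$ and an arbitrary $Y\in H^{1,2}_H(T\X)$, and take test forms $Z_m\to Y$ in $H^{1,2}_H$. Then the $gZ_m$ are test forms, and $d(gZ_m)=dg\wedge Z_m+g\,dZ_m$ and $\delta(gZ_m)=g\,\delta Z_m-\nabla g\cdot Z_m$ converge in $L^2$ simply because $g,|dg|\in L^\infty$. Hence $gY\in H^{1,2}_H(T\X)$ with the Leibniz rules, and no bound on $Z_m$ or $Y$ is used.

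Second, after your (correct) cutoff reduction to $f\in H^{1,2}(\X)\cap L^\infty(\X)$, let $f_j\defeq {\rm h}_{t_j}f\in\Test(\X)$. Then $\|f_j\|_{L^\infty}\le\|f\|_{L^\infty}$ and $f_j\to f$ in $H^{1,2}$, and the first step applies with $Y=X$. The cross terms satisfy $\|(df_j-df)\wedge X\|_{L^2}\le\|X\|_{L^\infty}\|df_j-df\|_{L^2}\to0$, and likewise for $\nabla f_j\cdot X$. Along a subsequence, $f_j\,dX\to f\,dX$ and $f_j\,\delta X\to f\,\delta X$ by dominated convergence. Closedness of $d$ and $\delta$ then gives $fX\in H^{1,2}_H(T\X)$. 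The only $L^\infty$ bound used is the one on $X$ in the hypothesis.

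Equivalently, your diagonal argument goes through if you choose $k=k(j)$ with $\|df_j\|_{L^\infty}\|X_k-X\|_{L^2}\le 1/j$. You then split $df_j\wedge X_k-df\wedge X=df_j\wedge(X_k-X)+(df_j-df)\wedge X$, rather than using the splitting that puts the $L^\infty$ norm on $X_k$.
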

\begin{lemma}[{\cite[Remark 2.3]{brena2023fine}}]\label{loc1} Let $(\X,\d,\mass)$ be an $\RCD(K,\infty)$ space.
    Let $X\in H^{1,2}_H(T\X)$. Then $\dive X=0$ $\mass$-a.e.\ on $\{X=0\}$.
\end{lemma}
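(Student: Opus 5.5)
We plan a cutoff argument: write $X=\phi_\varepsilon(|X|)X+(1-\phi_\varepsilon(|X|))X$, where $\phi_\varepsilon$ cuts off near $\{X=0\}$, and pass to the limit in the weak definition of divergence. The route through $\dive X=\tr\nabla X$ is \emph{not} available, since identifying the divergence with the trace of the covariant derivative is exactly the kind of statement (like $\tr\hess u=\Delta u$) that fails on weighted spaces. We only use the weak definition: $\int g\,\dive X\de\mass=-\int\nabla g\cdot X\de\mass$ for Lipschitz $g$ with bounded support, with $\dive X=-\delta X\in L^2$.

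\textbf{Step 1 (regularity of $|X|$).} First we show that $|X|\in H^{1,2}(\X)$ and that $|D|X||=0$ $\mass$-a.e.\ on $\{X=0\}$. By \cite{Gigli14}, on $\RCD(K,\infty)$ spaces we have $H^{1,2}_H(T\X)\subseteq H^{1,2}_C(T\X)$. This follows from the Bochner inequality for $1$-forms, $\int|\nabla X|^2\le\int |dX|^2+|\delta X|^2-K|X|^2$, proved on test forms and extended by density. For $X\in H^{1,2}_C(T\X)$ the Kato-type inequality $|D|X||\le|\nabla X|$ gives $|X|\in H^{1,2}(\X)$. Locality of the minimal weak upper gradient then yields $|D|X||=0$ a.e.\ on $\{|X|=0\}$. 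We expect this to be the main technical point, although it relies only on standard results from \cite{Gigli14}.

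\textbf{Step 2 (cutoff and product rule).} For $\varepsilon>0$ let $\phi_\varepsilon(t)\defeq\min\{t/\varepsilon,1\}$ and $f_\varepsilon\defeq\phi_\varepsilon(|X|)$. Then $f_\varepsilon\in H^{1,2}(\X)$, $0\le f_\varepsilon\le1$, and $|\nabla f_\varepsilon|=\varepsilon^{-1}|D|X||\chi_{\{0<|X|<\varepsilon\}}$. We claim that $f_\varepsilon X\in D(\dive)$ with
\[
\dive(f_\varepsilon X)=f_\varepsilon\dive X+\nabla f_\varepsilon\cdot X .
\]
To check this, fix a Lipschitz $g$ with bounded support. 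Then
\[
\int\nabla g\cdot f_\varepsilon X\de\mass=\int\nabla(f_\varepsilon g)\cdot X\de\mass-\int g\nabla f_\varepsilon\cdot X\de\mass.
\]
Since $f_\varepsilon g\in H^{1,2}(\X)$ is bounded with bounded support, and $X,\dive X\in L^2$, a density argument in $H^{1,2}$ gives $\int\nabla(f_\varepsilon g)\cdot X\de\mass=-\int f_\varepsilon g\dive X\de\mass$. This proves the product rule. (Alternatively, after a truncation one could invoke Lemma \ref{remaininH}.)

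\textbf{Step 3 (passage to the limit).} We have the pointwise bound
\[
|\nabla f_\varepsilon\cdot X|\le \varepsilon^{-1}|D|X||\,|X|\chi_{\{0<|X|<\varepsilon\}}\le |D|X||\chi_{\{0<|X|<\varepsilon\}}.
\]
Since $\chi_{\{0<|X|<\varepsilon\}}\to0$ pointwise, dominated convergence gives $\nabla f_\varepsilon\cdot X\to 0$ in $L^2$. Moreover $f_\varepsilon\dive X\to\chi_{\{X\neq0\}}\dive X$ in $L^2$ and $f_\varepsilon X\to \chi_{\{X\ne 0\}}X=X$ in $L^2$. Passing to the limit in
\[
\int g\,\dive(f_\varepsilon X)\de\mass=-\int\nabla g\cdot f_\varepsilon X\de\mass
\]
yields
\[
\int g\,\chi_{\{X\ne0\}}\dive X\de\mass=-\int\nabla g\cdot X\de\mass=\int g\,\dive X\de\mass
\]
for all such $g$. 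Hence $\dive X=\chi_{\{X\neq0\}}\dive X$ $\mass$-a.e., that is, $\dive X=0$ a.e.\ on $\{X=0\}$.
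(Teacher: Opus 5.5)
Your proposal is correct and is essentially the argument the paper relies on. The cited \cite[Remark 2.3]{brena2023fine}, which the paper reproduces for $d$ in the proof of Lemma \ref{loc15}, uses the cutoff $\varphi_k=\tilde\varphi_k\circ|X|$, the Leibniz rule, the bound $|\nabla\varphi_k||X|\le\chi_{\{0<|X|<1/k\}}|\nabla|X||\to0$ in $L^2$, and closedness of the operator; this is your Steps 2--3 with the complementary cutoff $f_\varepsilon=1-\varphi_{1/\varepsilon}$, passing to the limit in the weak formulation against Lipschitz $g$ instead of applying closedness of $\dive$ to $\varphi_kX\to0$, while your Step 1 supplies (via $H^{1,2}_H\subseteq H^{1,2}_C$ and Kato's inequality) the regularity $|X|\in H^{1,2}(\X)$ that Lemma \ref{loc15} instead assumes.
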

\begin{lemma}\label{loc15} Let $(\X,\d,\mass)$ be an $\RCD(K,\infty)$ space.
    Let $X\in H^{1,2}_d(\Lambda^mT\X)$ with $|X|\in H
    ^{1,2}(\X)$. Then 
$d X=0$ $\mass$-a.e.\ on $\{X=0\}$.
\end{lemma}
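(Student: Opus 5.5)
The plan is to deduce the statement from the analogous one-dimensional-lower fact, Lemma \ref{loc1}, or more directly to imitate its proof, which rests on a locality principle for Sobolev objects. The guiding idea is the following. On $\{X=0\}$ the function $|X|$ vanishes, so its minimal weak upper gradient vanishes $\mass$-a.e.\ there. Since $X\in H^{1,2}_d$, we can expect to control $dX$ pointwise on $\{X=0\}$ through the first-order behaviour of $X$, and that behaviour is degenerate on this set. To turn this into an argument, I would try to express $dX$ through covariant or Sobolev derivatives against test objects. The localization is then implemented by cut-offs of the form $\phi_\eps(|X|)$.

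Concretely, fix $\phi_\eps\colon[0,\infty)\to[0,1]$ with $\phi_\eps=1$ on $[0,\eps]$, $\phi_\eps=0$ on $[2\eps,\infty)$, and $|\phi_\eps'|\le 2/\eps$. Set $\psi_\eps=1-\phi_\eps(|X|)$, which lies in $S^2(\X)\cap L^\infty$ because $|X|\in H^{1,2}(\X)$. Then $\psi_\eps X\to X$ in $L^2$ as $\eps\to0$. By the Leibniz rule for the exterior differential,
\[
d(\psi_\eps X)=\psi_\eps\, dX+d\psi_\eps\wedge X .
\]
We can write $d\psi_\eps=-\phi_\eps'(|X|)\,d|X|$, and this is supported on $\{\eps<|X|<2\eps\}$. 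There it satisfies
\[
|d\psi_\eps\wedge X|\le \tfrac{2}{\eps}\,|d|X||\,|X|\le 4\,|d|X||\,\chi_{\{\eps<|X|<2\eps\}} .
\]
This bound tends to $0$ in $L^2$ by dominated convergence, since $|d|X||\in L^2$. Hence $d(\psi_\eps X)\to \chi_{\{X\neq0\}}dX$ in $L^2$. The closedness of the operator $d$ on $H^{1,2}_d$ then yields the identity
\[
dX=\chi_{\{X\ne0\}}\,dX ,
\]
which is precisely the claim.

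The main obstacle is justifying the Leibniz rule $d(fX)=f\,dX+df\wedge X$ for $f\in S^2\cap L^\infty$ and $X\in H^{1,2}_d$. Such a product $fX$ need not a priori lie in $H^{1,2}_d$, the closure of test forms; compare Lemma \ref{remaininH}, which handles this issue only for $L^\infty$ fields. I would therefore work with the larger space $W^{1,2}_d$, defined by duality, where $d$ is still closed and the Leibniz rule with $S^2\cap L^\infty$ multipliers follows directly from the defining integration by parts against test forms. The only caveat is that $df\wedge X$ must be in $L^2$, and this is guaranteed here by the pointwise bound above. The convergence argument only requires closedness of $d$ in this weak sense: the limit identity takes place in $L^2$ and identifies the weak differential. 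That weak differential coincides with $dX$ for $X\in H^{1,2}_d\subseteq W^{1,2}_d$.
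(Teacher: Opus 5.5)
Your argument is correct and is essentially the paper's proof. The paper uses the cutoff $\varphi_k=\tilde\varphi_k\circ|X|$, equal to $1$ on $\{X=0\}$, shows $\varphi_kX\to 0$ and $d(\varphi_kX)\to\chi_{\{X=0\}}dX$ in $L^2$ via the same bound $|d\varphi_k\wedge X|\le\chi_{\{0<|X|<1/k\}}|\nabla|X||$, and concludes by closedness of the differential; your $\psi_\varepsilon=1-\phi_\varepsilon(|X|)$ is just the complementary version, and your justification of the Leibniz rule in the weak space $W^{1,2}_d$ (needing only $d\psi_\varepsilon\wedge X\in L^2$) spells out what the paper leaves to ``the calculus rules''.
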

\begin{proof}
   The proof is exactly  that of \cite[Remark 2.3]{brena2023fine}. We include it anyway for the reader’s convenience.
   Define 
   \begin{equation}
       \tilde\varphi_k(t)\defeq
       \begin{cases}
           1\qquad&\text{for }t\le 0,\\
           1-kt&\text{for }0<t<k^{-1},\\
           0&\text{for }t\ge k^{-1}.
       \end{cases}
   \end{equation}
   We also define $\varphi_k\defeq \tilde\varphi_k\circ|X|$. By the calculus rules, $\varphi_k\in H^{1,2}(\X)$ with 
   \begin{equation}
       |\nabla\varphi_k|=|\tilde\varphi_k'|\circ |X||\nabla|X||\le k\chi_{\{|X|\in (0,k^{-1})\}}|\nabla|X||,
   \end{equation}
   in particular, $|\nabla\varphi_k||X|\le\chi_{\{|X|\in (0,k^{-1})\}} |\nabla|X||\rightarrow 0$ in $L^2(\X)$.  It follows that $d(\varphi_kX)= d\varphi_k \wedge X+\varphi_k dX\rightarrow \chi_{\{|X|=0\}}dX$ in $L^2(\Lambda^{m+1}TX)$. Now we notice that $\varphi_k X\rightarrow 0$  in $L^2(\Lambda^mTX)$ and employ the closedness of the differential of \cite[Theorem 3.5.2]{Gigli14}.
\end{proof}
\begin{lemma}[{\cite[Proposition 3.4.9]{Gigli14}}]\label{loc2} Let $(\X,\d,\mass)$ be an $\RCD(K,\infty)$ space.
    Let $X\in H^{1,2}_C(T\X)$.   Then $\nabla X=0$ $\mass$-a.e.\ on $\{X=0\}$.
\end{lemma}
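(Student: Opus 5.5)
The statement to prove is Lemma \ref{loc2}: for $X\in H^{1,2}_C(T\X)$, $\nabla X=0$ $\mass$-a.e.\ on $\{X=0\}$. I would mimic the cut-off argument used in the proof of Lemma \ref{loc15}, replacing the closedness of the exterior differential with the closedness of the covariant derivative.

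The first step is to check that $|X|\in H^{1,2}(\X)$ with $|\nabla|X||\le|\nabla X|$ $\mass$-a.e. I would obtain this by the standard Kato-type inequality for $X\in H^{1,2}_C(T\X)$, derived from the compatibility of $\nabla$ with the metric. For test vector fields one has $\nabla |X|^2=2\nabla X(\cdot,X)$. This gives the bound on $\{X\neq0\}$, and approximation by $\sqrt{|X|^2+\epsilon}$ extends it to all of $\X$.

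Next I would take the cut-offs $\varphi_k=\tilde\varphi_k\circ|X|$ exactly as in the proof of Lemma \ref{loc15}. These satisfy $0\le\varphi_k\le1$, $\varphi_k\to\chi_{\{X=0\}}$ pointwise, and $|\nabla\varphi_k||X|\le\chi_{\{|X|\in(0,k^{-1})\}}|\nabla|X||\to0$ in $L^2$. The Leibniz rule $\nabla(\varphi_kX)=\nabla\varphi_k\otimes X+\varphi_k\nabla X$ (valid for $\varphi_k\in H^{1,2}\cap L^\infty$ and $X\in H^{1,2}_C$) keeps $\varphi_kX\in W^{1,2}_C(T\X)$. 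Since the first term tends to $0$ and the second to $\chi_{\{X=0\}}\nabla X$ in $L^2$, we get $\nabla(\varphi_kX)\to\chi_{\{X=0\}}\nabla X$.

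Finally, $\varphi_kX\to\chi_{\{X=0\}}X=0$ in $L^2(T\X)$. By closedness of the covariant derivative (the operator $\nabla$ on $W^{1,2}_C$ is closed, \cite{Gigli14}), the limit of $\nabla(\varphi_k X)$ must equal $\nabla 0=0$. Hence $\chi_{\{X=0\}}\nabla X=0$, which is the claim.

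The main obstacle is the first step: making the Kato inequality and the Leibniz rule with a merely Sobolev, bounded, non-test function rigorous in $H^{1,2}_C$. A cleaner alternative avoids $H^{1,2}$-regularity of $|X|$ altogether. Take $\psi_\epsilon(t)=\tilde\varphi$ applied to $|X|^2$, which lies in $W^{1,1}$ and satisfies $|\nabla|X|^2|\le2|X||\nabla X|$. Then run the same argument with $\|\,|X|\,|\nabla\psi_\epsilon|\,\|_{L^2}\lesssim\|\nabla X\chi_{\{0<|X|^2<\epsilon\}}\|_{L^2}\to0$.
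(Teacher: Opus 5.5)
Your argument is correct and essentially matches the paper's approach. The paper only cites \cite[Proposition 3.4.9]{Gigli14} for this lemma, but your cut-off/closedness scheme is exactly the one it writes out for Lemma \ref{loc15}, with closedness of $\nabla$ on $W^{1,2}_C(T\X)$ in place of closedness of $d$. The one extra input is Kato's inequality $|\nabla|X||\le|\nabla X|$, which supplies the hypothesis $|X|\in H^{1,2}(\X)$ assumed in Lemma \ref{loc15}; this is where $X\in H^{1,2}_C(T\X)$ rather than $W^{1,2}_C(T\X)$ is needed, since compatibility with the metric is obtained by approximation with test vector fields.
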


In the next proposition, the vector field $\alpha$ should be thought of as the Hodge star of the $(n-1)$-form $\eta\defeq f d\pi_1\wedge\cdots d\pi_{n-1}$, up to a sign that depends on $n$. In particular, \eqref{vefdscdc} is nothing more than the well-known formula $d\eta=(-1)^{n-1}\ast(\tr(\nabla (\ast \eta)))$. Due to the lack of regularity of the space, we have to prove it by a careful computation.
\begin{prop}\label{fdcsc} Let $(\X,\d,\mass)$ be an $\RCD(K,\infty)$ space, let $E$ be a Borel set and let $f,\pi_1,\dots,\pi_{n-1}\in\Test(\X)$.  Assume that $L^2(T\X)$ has constant dimension $n$ on $E$   and let $V_1,\dots,V_n\in H^{1,2}_H(T\X)\cap L^\infty(T\X)$ be such that, for $i,j=1,\dots,n$, $V_i\,\cdot\,V_j=\delta_{i,j} \ \mass$-a.e.\ on  $E$.
    Define  $\alpha$  as follows:
    \begin{equation}\label{defalpha}
        \alpha\defeq \sum_{i=1}^n (-1)^{i-1} \big(fd\pi_1\wedge\cdots\wedge d\pi_{n-1}\,\cdot\,V_1\wedge\cdots \wedge \hat V_i \wedge\cdots \wedge V_n\big)V_i.
    \end{equation}
    Then, $\alpha\in H^{1,2}_H( T\X)\cap L^\infty(T\X)$ and
    \begin{equation}\label{vefdscdc}
        d\big(f d\pi_1\wedge\cdots \wedge d\pi_{n-1}\big)=\tr(\nabla \alpha)V_1\wedge\cdots\wedge V_n\qquad\mass\text{-a.e.\ on }E.
    \end{equation}
\end{prop}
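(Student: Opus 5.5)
The plan is to first settle the regularity of $\alpha$, and then to prove \eqref{vefdscdc} by testing both sides against the $n$-vector $V_1\wedge\cdots\wedge V_n$. Since $L^2(T\X)$ has dimension $n$ on $E$ and the $V_i$ are orthonormal there, they form a frame on $E$. Hence two $n$-forms agree $\mass$-a.e.\ on $E$ as soon as their pairings with $V_1\wedge\cdots\wedge V_n$ agree.

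For the regularity, write $\alpha=\sum_i(-1)^{i-1}g_iV_i$ with
\[
g_i\defeq f\,d\pi_1\wedge\cdots\wedge d\pi_{n-1}\,\cdot\,V_1\wedge\cdots\wedge\hat V_i\wedge\cdots\wedge V_n .
\]
Each $g_i$ is $f$ times an $(n-1)\times(n-1)$ determinant of the functions $\nabla\pi_k\cdot V_j$.
\begin{itemize}
\item On $\RCD(K,\infty)$ spaces $H^{1,2}_H(T\X)\subseteq H^{1,2}_C(T\X)$, by Bochner's inequality as in \cite{Gigli14}.
\item For $\pi\in\Test(\X)$ and $V\in H^{1,2}_C\cap L^\infty$, the function $\nabla\pi\cdot V$ is bounded and lies in $S^2(\X)$, with
\[
d(\nabla\pi\cdot V)=\hess\pi(V,\cdot)+\nabla V(\cdot,\nabla\pi).
\]
This differential is in $L^2$ because $\hess\pi\in L^2$, $V\in L^\infty$, $\nabla V\in L^2$ and $|\nabla\pi|\in L^\infty$.
\item Products of bounded $S^2$ functions are bounded and in $S^2$, so $g_i\in S^2(\X)\cap L^\infty(\X)$.
\end{itemize}
Lemma \ref{remaininH} then gives $g_iV_i\in H^{1,2}_H\cap L^\infty$, and hence $\alpha\in H^{1,2}_H(T\X)\cap L^\infty(T\X)$.

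For the identity, I would compute the right-hand side directly: $\tr\nabla\alpha=\dive\alpha=\sum_i(-1)^{i-1}\big(\nabla g_i\cdot V_i+g_i\dive V_i\big)$. For the left-hand side, set $\eta=f\,d\pi_1\wedge\cdots\wedge d\pi_{n-1}$ and use the invariant formula for the exterior differential from \cite[Section 3.5]{Gigli14}:
\[
d\eta(V_1,\dots,V_n)=\sum_i(-1)^{i-1}V_i\big(\eta(\dots\hat V_i\dots)\big)+\sum_{i<j}(-1)^{i+j}\eta\big([V_i,V_j],\dots\big),
\]
where $[V_i,V_j]=\nabla_{V_i}V_j-\nabla_{V_j}V_i$. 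That reference states this formula for test vector fields; I would extend it to $V_i\in H^{1,2}_C\cap L^\infty$ by approximation, since both sides are continuous with respect to the relevant convergence when $\eta$ is a fixed test form. Because $\eta(\dots\hat V_i\dots)=g_i$, the first sums agree, and the claim reduces to
\[
\sum_ig_i\dive V_i=\sum_{i<j}(-1)^{i+j}\eta\big([V_i,V_j],V_1,\dots,\hat V_i,\dots,\hat V_j,\dots,V_n\big)\qquad\mass\text{-a.e.\ on }E.
\]

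To prove this reduced identity, expand on $E$ in the frame: $\dive V_i=\sum_j\nabla_{V_j}V_i\cdot V_j$, and write $[V_i,V_j]=\sum_k([V_i,V_j]\cdot V_k)V_k$. In the $\eta$-term only $k\in\{i,j\}$ survive, since otherwise the frame vector $V_k$ appears twice. The needed algebraic input is the skew-symmetry
\[
\nabla_XV_i\cdot V_j=-V_i\cdot\nabla_XV_j\qquad\mass\text{-a.e.\ on }E,
\]
which in particular gives $\nabla_X V_i\cdot V_i=0$. It follows from $d(V_i\cdot V_j)=0$ $\mass$-a.e.\ on $E\subseteq\{V_i\cdot V_j-\delta_{ij}=0\}$, by locality of the minimal weak upper gradient on level sets, together with the Leibniz rule $d(V_i\cdot V_j)=\nabla V_i(\cdot,V_j)+\nabla V_j(\cdot,V_i)$ for $H^{1,2}_C\cap L^\infty$ fields. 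With this, a bookkeeping of signs matches the terms $g_i\,\nabla_{V_j}V_i\cdot V_j$ on the left with the $k=i$ and $k=j$ components of the brackets on the right.

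I expect the main obstacle to be the justification rather than the algebra. First, the coordinate formula for $d\eta$ must be shown to hold with the nonsmooth fields $V_i$. Second, all frame identities are only available $\mass$-a.e.\ on $E$, which need not be open, so every step must be pointwise a.e.\ and use only locality. If the invariant formula resists extension, my fallback is to work directly with $df\wedge d\pi_1\wedge\cdots\wedge d\pi_{n-1}$, expanded in the frame, and to compare it with the expression for $\dive\alpha$ obtained after expanding $\nabla g_i$ via the Hessian formula above. In that comparison, the $\hess\pi_k$ terms cancel by symmetry of the Hessian, and the $\nabla V_j$ terms cancel against $g_i\dive V_i$ by the skew-symmetry.
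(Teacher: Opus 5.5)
\textbf{Error: you identify $\tr\nabla$ with $\dive$.} You write $\tr\nabla\alpha=\dive\alpha$, and later expand $\dive V_i=\sum_j\nabla_{V_j}V_i\cdot V_j$ on $E$. Neither identity is available. The divergence is taken with respect to $\m$, so in the generality of the statement both are false. For example, the Gaussian space $(\RR^n,|\cdot|,e^{-|x|^2/2}\mathscr{L}^n)$ is $\RCD(1,\infty)$, and there $\dive V=\tr\nabla V-x\cdot V$. Even when $\m=\haus^n$, the difference $\tr\nabla V-\dive V$ is exactly the deficit $\mathscr{D}(V)$ whose vanishing is the paper's main result. This proposition is one of the inputs to that result: in \eqref{partialT}, $\tr\nabla\alpha_\eta$ is split as $\mathscr{D}(\alpha_\eta)+\dive\alpha_\eta$. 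So your argument is circular as written, and the divergence must not appear in this proof at all.

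\textbf{The repair.} It is easy, because your two misidentifications cancel. Use the Leibniz rule for the covariant derivative, $\nabla(g_iV_i)=\nabla g_i\otimes V_i+g_i\nabla V_i$, valid for $g_i\in S^2\cap L^\infty$ and $V_i\in H^{1,2}_C\cap L^\infty$. Taking the trace in the orthonormal frame on $E$ gives $\tr\nabla\alpha=\sum_i(-1)^{i-1}(\nabla g_i\cdot V_i+g_i\tr\nabla V_i)$, with $\tr\nabla V_i=\sum_j\nabla_{V_j}V_i\cdot V_j$. The reduced identity is then $\sum_i(-1)^{i-1}g_i\tr\nabla V_i=\sum_{i<j}(-1)^{i+j}\eta([V_i,V_j],\dots)$; note the sign $(-1)^{i-1}$, which you dropped. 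Your bracket bookkeeping does prove this. For $i<j$, the $k=i$ and $k=j$ components give $(-1)^{j-1}g_j\nabla_{V_i}V_j\cdot V_i$ and $(-1)^{i-1}g_i\nabla_{V_j}V_i\cdot V_j$. Since $\nabla_{V_a}V_a\cdot V_a=0$, these sum to $\sum_a(-1)^{a-1}g_a\tr\nabla V_a$. Your fallback needs the same substitution of $\tr\nabla$ for $\dive$.

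\textbf{Comparison with the paper.} With this fix, your route genuinely differs from the paper's.
\begin{itemize}
\item The paper expands $\eta$ in the frame as $\tilde\eta=\sum_i(\eta\cdot\hat\omega_i)\hat\omega_i$ and computes $d\tilde\eta$ via the Leibniz rule and $dV_j(A,B)=\nabla V_j(A,B)-\nabla V_j(B,A)$. That formula is extended from test forms by a density argument that is linear in $V_j$, so plain $H^{1,2}_H$-convergence suffices. It then transfers the result to $d\eta$ on the possibly non-open set $E$ via Lemma \ref{loc15}, after checking $|\eta-\tilde\eta|\in H^{1,2}$.
\item You pair the globally known $d\eta$ directly with $V_1\wedge\cdots\wedge V_n$, so you need neither $\tilde\eta$ nor a locality lemma for $d$. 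The price is extending the invariant formula to nonsmooth $V_i$. That formula is multilinear, so $H^{1,2}_C$-convergence of test approximants alone does not give $L^1$-convergence of its terms. You should instead pass to $\m$-a.e.\ limits along a subsequence, using the explicit product-rule expressions for $dg_i$, which hold for both the approximants and the limit.
\item Your fallback expands $d\eta=df\wedge d\pi_1\wedge\cdots\wedge d\pi_{n-1}$ as a determinant in the frame and uses your formula for $d(\nabla\pi_k\cdot V_j)$. It needs no approximation and is arguably the most elementary proof.
\end{itemize}
Your regularity argument for the $g_i$ also spells out what the paper leaves implicit when it invokes Lemma \ref{remaininH}.
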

\begin{proof} 
First,  $\alpha\in H^{1,2}_H( T\X)\cap L^\infty(T\X)$ by Lemma \ref{remaininH}. 
To avoid the repetition of cumbersome notation, we  use the abbreviations:
\begin{alignat}{2}
 \omega&\defeq V_1\wedge\cdots \wedge V_n,
   \qquad
    &
    \hat\omega_{ i}&\defeq V_1\wedge\cdots \wedge \hat V_i \wedge\cdots \wedge V_n,\\
   \eta&\defeq f d\pi_1\wedge\cdots \wedge d\pi_{n-1}, \qquad
    &f_{ i}&\defeq \eta\,\cdot\,\hat \omega_{i}.
\end{alignat} 
We note that $(\hat \omega_{ i})_{i=1,\dots,n}$ forms a basis of $L^2(\Lambda^{n-1}T\X)$ on $E$. 
We can also define 
\begin{equation}
    \tilde \eta\defeq \sum_{i=1}^n f_i \hat\omega_{ i}=\sum_{i=1}^n \eta\,\cdot\,\hat\omega_i \hat\omega_{ i},\qquad
    \tilde\alpha\defeq \sum_{i=1}^n (-1)^{i-1} f_i V_i.
\end{equation}
Notice that  $\tilde\alpha\in H^{1,2}_H( T\X)\cap L^\infty(T\X)$ by Lemma \ref{remaininH}.

By $V_i\,\cdot\, V_j=\delta_{i,j}\ \mass$-a.e.\ on $E$, we see that $\tilde\eta=\eta\ \mass$-a.e.\ on $E$. Moreover, $\tilde\alpha=\alpha\  \mass$-a.e.\ on $E$. 
Now notice that, if $X\in L^2(T\X)$,
\begin{equation}
    \nabla |\eta-\tilde\eta|^2\,\cdot\,X=2\big(\nabla_X (\eta-\tilde\eta)\big)\,\cdot\, (\eta-\tilde\eta),
\end{equation}
where $\nabla_X(\eta-\tilde\eta)$ is defined as follows: For $g\in H^{1,2}(\X)\cap L^\infty(\X)$ and $X_1,\dots,X_{n-1}\in H^{1,2}_H(TX)\cap L^\infty(T\X)$,
\begin{equation}
    \nabla_X(g X_1\wedge\cdots\wedge X_{n-1})=\nabla_X g\wedge X_1\wedge\cdots\wedge X_{n-1}+ \sum_{i=1}^{n-1}g X_1\wedge\cdots\wedge\nabla X_i(X,\,\cdot\,)\wedge\cdots \wedge X_{n-1},
\end{equation}
and the definition is then extended by linearity. It follows that $|\eta-\tilde\eta|\in H^{1,2}(\X)$, so that, by Lemma \ref{loc15}, $d\tilde\eta=d\eta\ \mass$-a.e.\ on $E$ (that $\tilde\eta\in H^{1,2}_d(\Lambda^{n-1}T\X)$  is easily verified by closedness of the differential of \cite[Theorem 3.5.2]{Gigli14} and the Leibniz rule of \cite[Proposition 3.5.4]{Gigli14}).
Moreover, Lemma \ref{loc2} implies that $\nabla \tilde\alpha=\nabla\alpha \ \mass$-a.e.\ on $E$. Hence, it is enough to prove 
\begin{equation}\label{vefdscdc1}d\tilde\eta=\tr(\nabla\tilde\alpha)    \omega\qquad\mass\text{-a.e.\ on }E.
\end{equation}

All the following identities are understood $\mass$-a.e.\ on $E$ and all  sums are over $1,\dots,n$. First,
\begin{equation}
       d\tilde\eta=\sum_i  d f_i\wedge\hat \omega_{i}+ f_id\hat \omega_{ i}=\sum_i (-1)^{i-1}\nabla f_i\,\cdot\,V_i \omega+f_id\hat \omega_{i}.
\end{equation}
Further, it is easy to compute
\begin{align}
   d\hat\omega_i=(-1)^{i-1}\sum_{j\ne i}dV_j (V_i,V_j)\omega.
\end{align}
Now notice that if $g,h\in\Test(\X)$, then $d(gdh)(A,B)=\nabla (g\nabla h)(A,B)-\nabla (g\nabla h)(B,A)$ for every $A,B\in H^{1,2}_H(T\X)$. By density, this continues to hold with $gdh$ replaced by $V\in H^{1,2}_H(T\X)\cap L^\infty(T\X) $, where we  use our identification $TX=T^*X$ for $V$. It follows that
\begin{align}
       d\hat\omega_i&= (-1)^{i-1}\sum_{j\ne i}\Big(\nabla V_j (V_i,V_j)-\nabla V_j (V_j,V_i)\Big)\omega=-(-1)^{i-1}\sum_{j}\nabla V_j (V_j,V_i)\omega\\
     &=(-1)^{i-1}\sum_{j}\nabla V_i (V_j,V_j)\omega
\end{align}
where we used that $\nabla V_j(V_i,V_j)=0$  and $\nabla V_j (V_j,V_i)=-\nabla V_i (V_j,V_j)$ by \cite[Proposition 3.4.6]{Gigli14}. It follows that
\begin{equation}
     d\tilde\eta=\sum_i(-1)^{i-1} \nabla f_i\,\cdot\,V_i\omega+\sum_{i,j}(-1)^{i-1}f_i\nabla V_i(V_j,V_j)\omega.
\end{equation}
On the other hand,
\begin{align}
        \tr(\nabla\tilde\alpha)&=\sum_j \nabla \tilde\alpha(V_j,V_j)=\sum_j \sum_i (-1)^{i-1} (\nabla f_i\otimes V_i)(V_j,V_j)+ (-1)^{i-1} (f_i\nabla V_i)(V_j,V_j)\\
    &=\sum_i(-1)^{i-1} \nabla f_i\,\cdot\, V_i+\sum_j\sum_i(-1)^{i-1} f_i\nabla V_i(V_j,V_j).
\end{align}
Hence \eqref{vefdscdc1} is proved.\end{proof}

The role of the following  lemma is to show that perimeter measures are concentrated on $\mass$-null sets. Notice that in the setting of doubling spaces satisfying a Poincaré inequality, a  more precise description is available in \cite{amb01,amb00}. For finite-dimensional $\RCD$ spaces, see \cite{bru2019rectifiability,ABPrank}.
\begin{lemma}\label{persing}
    Let $(\X,\d,\m)$ be an $\RCD(K,\infty)$ space. Let $E$ be a set of finite perimeter. Then $|D\chi_E|\perp\m$, in the sense that the measure $|D\chi_E|$ is concentrated on an $\m$-null subset of $\X$. 
\end{lemma}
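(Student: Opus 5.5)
The plan is to reduce the claim to a \emph{locality property of the absolutely continuous part of the total variation}, and then prove that property with the same truncation idea used in Lemma \ref{loc1} and Lemma \ref{loc15}. Write the Lebesgue decomposition $|D\chi_E|=g\,\m+|D\chi_E|^s$ with $|D\chi_E|^s\perp\m$. It is enough to show $g=0$ $\m$-a.e. on $E$ and on $\X\setminus E$. Heuristically, $\chi_E$ is constant $\m$-a.e. on each of these sets, so its "absolutely continuous derivative" should vanish there. This is the BV analogue of the Sobolev fact that $|Df|=0$ $\m$-a.e. on level sets.

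\emph{Step 1 (Gauss--Green representation).} On $\RCD(K,\infty)$ spaces one has a polar decomposition $D\chi_E=\nu_E|D\chi_E|$, with $|\nu_E|=1$ $|D\chi_E|$-a.e. It is characterised by
\begin{equation*}
\int_E \dive X\de\m=-\int X\,\cdot\,\nu_E\de|D\chi_E|
\end{equation*}
for, say, $X\in H^{1,2}_H(T\X)\cap L^\infty(T\X)$ with $\dive X\in L^\infty$. In particular,
\begin{equation*}
\int \varphi\, g\de\m\ \ge\ \sup\Big\{-\int_E\dive(\varphi X)\de\m-\int \varphi X\cdot\nu_E\de|D\chi_E|^s\Big\}.
\end{equation*}
Conversely, $g$ can be recovered by testing against $\varphi X$ with $X\cdot\nu_E$ close to $1$.

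\emph{Step 2 (cut-off near the boundary).} Fix a Borel set $A\subseteq E$ with $\m(A)<\infty$, and aim to show $\int_A g\de\m=0$. Choose cut-off functions concentrated away from $\partial E$ in a measure-theoretic sense, built as in the proof of Lemma \ref{loc15}. Concretely, take $\varphi_k\defeq\tilde\varphi_k\circ (1-h_{t}\chi_E)$ with $t=t_k\downarrow0$, using the heat flow (whose regularisation is available by Bakry--\'Emery). Then $\varphi_k\to\chi_E$ $\m$-a.e., and $\nabla\varphi_k$ is supported where $h_t\chi_E\in(1-k^{-1},1)$. Next, compute $\int\varphi_k X\cdot\nu_E\de|D\chi_E|$ via Gauss--Green as $-\int_E(\varphi_k\dive X+\nabla\varphi_k\cdot X)\de\m$. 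Comparing this with the same identity for $\varphi_k\equiv1$ on a slightly larger region should force the absolutely continuous contribution on $A$ to vanish in the limit. The symmetric argument with $1-\chi_E$ handles $\X\setminus E$.

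\emph{Main obstacle.} The hard part is controlling the term $\int_E\nabla\varphi_k\cdot X\de\m$ in Step 2. Without doubling one has no Lebesgue differentiation, so density points of $E$ are not available. What I would try first is an $L^1$ estimate of the form
\begin{equation*}
\int |\nabla h_t\chi_E|\de\m\le e^{-Kt}|D\chi_E|(\X).
\end{equation*}
Combined with the coarea formula in $t$-levels, this should give
\begin{equation*}
\int\chi_{\{h_t\chi_E\in(1-k^{-1},1)\}}|\nabla h_t\chi_E|\de\m\to0
\end{equation*}
after averaging over levels. I would then choose $k$ and $t_k$ diagonally so that the error vanishes while $|D\chi_E|^s$ is untouched. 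If this fails, the fallback is to prove directly the locality $|Df|^a=0$ $\m$-a.e. on $\{f=c\}$ for $f\in BV$. For this I would combine the coarea formula with the known locality of the minimal weak upper gradient for the Lipschitz approximations $f_j\to f$ realising $|Df|$.
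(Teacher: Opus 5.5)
You reach for the right tools (heat flow, Bakry--\'Emery, a truncation), but the proposal has a genuine gap. Nothing in it separates the absolutely continuous part $g\,\m$ from $|D\chi_E|^s$, and the step meant to do this (``comparing this with the same identity \dots should force'') is not an argument. The Gauss--Green identity with the cut-off $\varphi_k$ only moves the difficulty to $\int_E\nabla\varphi_k\cdot X\de\m$, and that term is not small. With $u_t\defeq{\rm h}_t\chi_E$ you have $|\nabla\varphi_k|=k|\nabla u_t|\chi_{\{1-k^{-1}<u_t<1\}}$, so by the coarea formula $\int|\nabla\varphi_k|\de\m=k\int_{1-k^{-1}}^1|D\chi_{\{u_t>s\}}|(\X)\de s$. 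Since $\{u_t>s\}\to E$ in $L^1$ (for $\m(E)<\infty$), lower semicontinuity and Fatou give $\liminf_{t\downarrow0}\int|\nabla\varphi_k|\de\m\ge|D\chi_E|(\X)$ for every $k$. Averaging over levels gains the factor $k^{-1}$ you mention, but the Lipschitz constant $k$ of $\tilde\varphi_k$ gives it back. Heuristically this term carries the whole boundary contribution, as it must. Moreover, the integrated bound $\int|\nabla u_t|\de\m\le e^{-Kt}|D\chi_E|(\X)$ discards exactly the information you need, since it does not record which part of $|D\chi_E|$ the gradient mass comes from. The fallback has the same defect. For $f=\chi_E$ the coarea formula is tautological. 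Also, the fact that Lipschitz approximants have gradients vanishing on certain sets does not pass to weak limits of $|\nabla f_j|\m$: densities supported on sets of vanishing measure can converge weakly to an absolutely continuous measure.

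The missing idea is to use the Bakry--\'Emery estimate at the level of measures,
\[
|\nabla u_t|\,\m\le e^{-Kt}\,{\rm h}_t|D\chi_E|=e^{-Kt}\big(({\rm h}_tg)\,\m+{\rm h}_t|D\chi_E|^s\big),
\]
together with a truncation whose gradient lives on a set of vanishing measure, and lower semicontinuity of the total variation in place of Gauss--Green. The paper takes $T_\eps$ equal to $0$ below $\eps$, to $1$ above $1-\eps$, and linear in between. Then $T_\eps(u_t)\to\chi_E$ in $L^1$, so $|D\chi_E|(\X)\le(1-2\eps)^{-1}\liminf_{t\downarrow0}\int_{A_{t,\eps}}|\nabla u_t|\de\m$ with $A_{t,\eps}\defeq\{\eps<u_t<1-\eps\}$. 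Since $\m(A_{t,\eps})\to0$ and ${\rm h}_tg\to g$ in $L^1$, the absolutely continuous part contributes $\int_{A_{t,\eps}}{\rm h}_tg\de\m\to0$. The singular part contributes at most ${\rm h}_t|D\chi_E|^s(\X)=|D\chi_E|^s(\X)$. Letting $\eps\downarrow0$ gives $|D\chi_E|(\X)\le|D\chi_E|^s(\X)$, i.e.\ $g=0$; no splitting into $E$ and $\X\setminus E$ is needed.
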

\begin{proof}
First, we recall  that, for fixed $x\in\X$, $B_r(x)$ has finite perimeter for a.e.\ $r$, thanks to the coarea formula. Hence, we can assume with no loss of generality  that $\m(E)<+\infty$. For $\eps\in(0,1/2)$, we consider the maps $T_\eps:\mathbb R\to\mathbb R$ defined as 
    \begin{equation*}
        T_\eps(s)\defeq\begin{cases}
            0\qquad &s\leq\eps, \\
            \frac{s-\eps}{1-2\eps}\quad &\eps<s<1-\eps, \\
            1\qquad &s\geq 1-\eps.
        \end{cases}
    \end{equation*}
  Observe that $T_\eps$ is Lipschitz and that $\Lip(T_\eps)=1/(1-2\eps)$ for all $\eps\in(0,1/2)$. Moreover $\chi_E=T_\eps(\chi_E)$ and, setting $u_t\defeq{\rm h}_t\chi_E$, we have $T_\eps(u_t)\to\chi_E$ in $L^1(\X)$ as $t\downarrow 0$ for all $\eps>0$. Thanks to the lower semicontinuity of the total variation and to the chain rule we get 
  \begin{equation}
  \label{giancarlo}
    |D\chi_E|(\X)\leq\liminf_{t\downarrow 0}\int|\nabla T_\eps(u_t)|\de\mass\leq\frac{1}{1-2\eps}\liminf_{t\downarrow 0}\int_{A_{t,\eps}}|\nabla u_t|\de\m,
  \end{equation}
  where we have defined $A_{t,\eps}\defeq\{\eps<u_t<1-\eps\}$. We now recall the Bakry--Émery estimate for $\chi_E\in{\rm BV}(\X)$ (see for example \cite[Proposition 4.1]{BGBV} after \cite{Savare13}), which reads as 
  \begin{equation}
      \label{eq:Bakry_Emery_RCD}
      |\nabla u_t|\m\leq e^{-Kt}{\rm h}_t|D\chi_E|=e^{-Kt}\big({\rm h}_t g\de\m+{\rm h}_t|D\chi_E|^s\big),
  \end{equation}
  where $|D\chi_E|=g\de\m+|D\chi_E|^s$ is the Lebesgue decomposition with $g\in L^1(\X)$ and $|D\chi_E|^s\perp\mass$.
Using \eqref{eq:Bakry_Emery_RCD} in \eqref{giancarlo} and exploiting linearity of the heat flow we get 
 \begin{equation}
     |D\chi_E|(\X)\leq\frac{1}{1-2\eps}\liminf_{t\downarrow 0}e^{-Kt}\Big(\int_{A_{t,\eps}}{\rm h}_tg\de\m+\int_{A_{t,\eps}}\d{\rm h}_t|D\chi_E|^s\Big).
 \end{equation}
 Observe now that the first term on the right-hand side goes to zero because ${\rm h}_tg\rightarrow g$ in $L^1(\X)$ and $\m(A_{t,\eps})\to 0$ as $t\downarrow 0$. Moreover,  we have ${\rm h}_t|D\chi_E|^s(A_{t,\eps})\leq{\rm h}_t|D\chi_E|^s(\X)=|D\chi_E|^s(\X)$, so that 
 \begin{equation}
    |D\chi_E|(\X)\leq\frac{1}{1-2\eps}|D\chi_E|^s(\X).
 \end{equation}
 Letting $\eps\downarrow 0$ finally yields
 \begin{equation}
    |D\chi_E|(\X)\leq|D\chi_E|^s(\X), 
 \end{equation}
 which proves the claim, since the absolutely continuous part of $|D\chi_E|$ has  to vanish by the above.
\end{proof}

\subsection{Essential dimension}\label{essdim}
The goal of this section is to show that $n$-rectifiable $\RCD$ spaces have a tangent module of constant dimension $n$. The main result is Theorem \ref{dimtan} below, which holds actually in a more general setting: $n$-rectifiable spaces supporting  a Poincaré inequality.
For finite-dimensional $\RCD$ spaces, this conclusion is already known by \cite{bru2021constancy, GP16}. The result for possibly infinite-dimensional $n$-rectifiable $\RCD$ spaces  is, to our knowledge, new. Our proof pivots on the theory of Lipschitz differentiability spaces (see the references in \cite{IPSabs}) and the results of \cite{IPSabs}.
Moreover, the technology recalled below will be a key ingredient in the proof of Proposition \ref{maincurrent} in the next section.

    \bigskip

 Let $(\X,\d,\mass)$ be a metric measure  space. A  weak chart of dimension $n$ is  a pair $(U,\varphi)$, where $U\subseteq\X$ is Borel with $\mass(U)>0$ and $\varphi:\X\rightarrow\RR^n$ is Lipschitz,  satisfying the following. For every $f\in\LIP(\X)$ and $\mass$-a.e.\ $x\in U$, there exists a unique linear map $d_xf:\RR^n\rightarrow\RR$ satisfying
\begin{equation}
    \lip(f_{|U}-d_xf \circ\phi_{|U})(x)=0.\label{dewfvsdc}
\end{equation}
Here, the local Lipschitz constant  is defined
\begin{equation}
    \lip(g)(x)\defeq \limsup_{y\rightarrow x}\frac{|g(y)-g(x)|}{\d(x,y)}
\end{equation}
at accumulation points  and is set to be zero at isolated points.
A metric measure space is called a weak Lipschitz differentiability space if it can be covered, up to an $\mass$-null subset, by a countable collection of weak charts $\mathcal{A}=(U_k,\varphi_k)_k$. In this case, following \cite{IPSabs}, we consider the concrete tangent bundle $T_{\mathcal A}\X\defeq \bigsqcup_{k} U_k\times\RR^{n_k}$, where $n_k$ is the dimension of the chart $(U_k,\varphi_k)$. $T_{\mathcal A}\X$ is equipped with the following norm: for $\mass$-a.e.\ $x\in U_k$, $\{x\}\times \RR^{n_k}$ is endowed with the dual norm of $(\RR^{n_k})^*\ni L\mapsto \lip({L\circ \varphi_k}_{|U_k})$. The space of $2$-integrable sections is independent of the chosen charts (up to isomorphism), and is denoted by $\Gamma_2(T\X)$, see \cite{IPSabs}.

\begin{theorem}\label{dimtan}
Let $(\X,\d,\haus^n)$ be an $n$-rectifiable metric measure space supporting a Poincaré inequality, namely, for every $f\in\LIP(\X)$, $x\in\X$ and $r\in (0,1)$, it holds
\begin{equation}\label{poincare}
    \int_{B_r(x)}\Big|f(y)-\dashint_{B_r(x)}f(z)\de\haus^n(z)\Big|\de\haus^n(y)\le C_{PI} r\int_{B_{2r}(x)}|D f|(y)\de\haus^n(y).
\end{equation}
Then  $(\X,\d,\haus^n)$  is a weak Lipschitz differentiability space with charts of constant dimension $n$. Moreover,  there exists an isometric isomorphism of normed modules $L^2(T\X)\simeq \Gamma_2(T\X)$.
In particular $L^2(T\X)$ has constant dimension $n$.
\end{theorem}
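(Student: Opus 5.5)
The plan is to work in three stages: build weak charts of dimension $n$ from the rectifiability, upgrade them to Lipschitz differentiability using the Poincaré inequality, and finally identify $L^2(T\X)$ with $\Gamma_2(T\X)$ via the results of \cite{IPSabs}.

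\textbf{Charts.} By $n$-rectifiability, $\X$ is covered, up to an $\haus^n$-null set, by countably many Borel sets $U_k$ together with bi-Lipschitz maps $\varphi_k\colon U_k\to\RR^n$. Using McShane extension componentwise, I will extend each $\varphi_k$ to a Lipschitz map $\X\to\RR^n$. The candidate atlas is then $\mathcal A=(U_k,\varphi_k)_k$.

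\textbf{Differentiability on each chart.} Fix $f\in\LIP(\X)$. The function $f\circ\varphi_k^{-1}$ is Lipschitz on $\varphi_k(U_k)\subseteq\RR^n$, so I extend it to a Lipschitz function on all of $\RR^n$. By Rademacher's theorem it is differentiable at $\mathcal L^n$-a.e.\ point. Since $\varphi_k$ is bi-Lipschitz, $\haus^n\mres U_k$ and $\mathcal L^n\mres\varphi_k(U_k)$ are mutually absolutely continuous under pushforward. Hence at $\haus^n$-a.e.\ $x\in U_k$ that is moreover a density point of $U_k$, we get a linear map $d_xf$ satisfying \eqref{dewfvsdc}. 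Here $\lip$ is computed along $U_k$, and the bi-Lipschitz property converts Euclidean little-$o$ into metric little-$o$.

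Uniqueness of $d_xf$ also follows from the bi-Lipschitz property. Suppose a linear map $L$ satisfies $\lip(L\circ\varphi_k|_{U_k})(x)=0$. Since $x$ is a Lebesgue density point of $\varphi_k(U_k)$, the set $\varphi_k(U_k)$ contains points near $\varphi_k(x)$ in every direction, so $L=0$. It follows that $(\X,\d,\haus^n)$ is a weak Lipschitz differentiability space with charts of constant dimension $n$.

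\textbf{Identification of the modules.} This step relies on \cite{IPSabs}. For spaces supporting a Poincaré inequality, the minimal weak upper gradient coincides with $\lip f$ a.e.\ (Cheeger). In addition, the norm on the chart fibres computed through $\lip$ agrees with the one computed through $|Df|$. Then the canonical map $df\mapsto (x\mapsto d_xf)$ extends to an isometric module isomorphism $L^2(T^*\X)\simeq\Gamma_2(T^*\X)$, and dualizing gives $L^2(T\X)\simeq\Gamma_2(T\X)$.

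The delicate point is that the chart norm uses $\lip$ of the restriction to $U_k$, not $\lip$ on all of $\X$. I expect to resolve this by density: at density points of $U_k$, combined with the Poincaré inequality, the two local Lipschitz constants coincide a.e. This is the main obstacle, and I would try to cite \cite{IPSabs} for it, since they compare $\Gamma_2$ and $L^2(T\X)$ under the Poincaré assumption.

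\textbf{Conclusion.} Since every chart has dimension $n$, $\Gamma_2(T\X)$ has constant local dimension $n$. This property is preserved by isomorphism of modules, so $L^2(T\X)$ has constant dimension $n$.
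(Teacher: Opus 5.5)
Your chart construction and your verification that the $(U_k,\varphi_k)$ are weak charts of dimension $n$ (Rademacher applied to $f\circ\varphi_k^{-1}$, uniqueness at density points) match the paper's first step. The gap is in the identification $L^2(T\X)\simeq\Gamma_2(T\X)$. This is where the content of the theorem lies, and you defer it to citations that do not apply as stated.

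\begin{itemize}
\item Cheeger's identity $|Df|=\lip f$ requires a \emph{doubling} measure together with the Poincar\'e inequality. No doubling is assumed here. None is available a priori in the intended application either: for $\RCD(K,\infty)$ spaces, \eqref{poincare} comes from \cite{Rajala12-2} in unnormalized form.
\item The ``delicate point'' you single out is misdirected. The input needed for \cite[Theorem 1.3]{IPSabs} is a family of moduli $\omega_x$ with $\lip(f_{|U})(x)\le\omega_x(|Df|(x))$ for a.e.\ $x\in U$. Since $\lip(f_{|U})\le\lip f$ holds trivially, restricting to $U$ costs nothing. The real difficulty is bounding a pointwise Lipschitz constant by the minimal weak upper gradient without doubling.
\item Your sentence that the fibre norm computed through $\lip$ agrees with the one computed through $|Df|$ is essentially the conclusion, not a step.
\end{itemize}

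The paper proves the missing bound directly, using only a.e.\ asymptotic information.
\begin{itemize}
\item By Kirchheim, $\haus^n(B_r(x))/(\omega_n r^n)\to1$ for a.e.\ $x$.
\item Apply \eqref{poincare} on $B_r(x)$ and restrict the double integral to $B_r(x)\cap U$. The error is at most $2\Lip(f)\,\haus^n(B_r(x)\setminus U)/\haus^n(B_r(x))$, which tends to $0$ at density points.
\item Push forward through the bi-Lipschitz chart and replace $g=f\circ\varphi^{-1}$ by its linearization at $x'=\varphi(x)$.
\item After rescaling, this gives $\dashint_{B_{1/L}(0)}|\nabla g(x')\cdot(y'-v')|\de y'\le C|Df|(x)$ for some bounded $v'$. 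Hence $|\nabla g(x')|\le C|Df|(x)$, and so $\lip(f_{|U})(x)\le C_L|\nabla g(x')|\le C|Df|(x)$.
\end{itemize}
This is exactly the hypothesis of \cite[Theorem 1.3]{IPSabs}, with a linear modulus. Alternatively, you could cite \cite[Proposition 5.1]{IPSabs}, which builds on \cite{BateLiDiff}. As written, however, your proposal neither proves this estimate nor points to a result that yields it under the present assumptions.
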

\begin{proof}
First, we show that $(\X,\d,\haus^n)$  is a weak Lipschitz differentiability space  with weak charts of dimension $n$. It is a classical result that we can find a sequence $(U_j,\varphi_j)_{j\in\NN}$, where $U_j$ are Borel subsets of $\X$, with $\haus^n(U_j)>0$ and  $\haus^n(\X\setminus\bigcup_j U_j)=0$  and $\varphi_j:U_j\rightarrow\RR^n$ are bi-Lipschitz onto their images. This follows from  \cite{Kir94} and will be recalled also in the first part of Section \ref{metrcurr}. Set $B_j\defeq\varphi_j(U_j)\subseteq\RR^n$. Fix $j$ and suppress the subscripts; we show that $(U,\varphi)$ is a weak chart. First, define $g\defeq f\circ\phi^{-1}\in\LIP( B)$. By extension, we consider $g\in\LIP(\RR^n)$. For a.e.\ $x'\in B$,  $g$ is differentiable, so that we can consider $\nabla g(x')$. Fix such a point $x'$ and set  $x\defeq \varphi^{-1}(x')$.
We have that
\begin{equation}\label{vefdcsc}
    \lim_{B\ni y'\rightarrow x'}\frac{g(y')-g(x')-\nabla g(x')(y'-x')}{|y'-x'|}=0.
\end{equation}
Hence, since $\varphi$ is bi-Lipschitz,  setting $y=\varphi^{-1}(y')$,
\begin{equation}\label{vtrefdssv}
        \lim_{U\ni y\rightarrow x}\frac{f(y)-f(x)-\nabla g(\phi(x))(\phi(y)-\phi(x))}{\d(y,x)}=0.
\end{equation}
Thus,  \eqref{dewfvsdc} (with $d_xf=\nabla g(\varphi(x))$) follows from  \eqref{vtrefdssv}.
 Uniqueness of $d_xf$ follows at density points of $B$ from the two equations above, by the uniqueness of $\nabla g$.  Of course, we can extend the chart $\varphi:U\rightarrow \RR^n$ to $\varphi:\X\rightarrow \RR^n$ Lipschitz.

If we show that there exists a family of moduli of continuity $(\omega_x)_{x\in U}$ such that, for every $f\in\LIP(\X)$, 
\begin{equation}\label{adscscdsc}
    \lip(f_{|U})(x)\le \omega_x(|D f|(x))\qquad\text{for $\haus^n$-a.e.\ $x\in U$},
\end{equation}
the conclusion  follows from \cite[Theorem 1.3]{IPSabs}.  We are going to do that directly; we remark, however, that another proof is already known, see  the argument in \cite[Proposition 5.1]{IPSabs}, after \cite{BateLiDiff}. 

First, by \cite[Theorem 9]{Kir94}, as stated in \cite[Theorem 5.4]{AmbKir00}, it holds that 
\begin{equation}\label{fdsac}
    \lim_{r\downarrow 0}\frac{\haus^n(B_{r}(x))}{\omega_nr^n}=1\qquad\text{for $\haus^n$-a.e.\ $x\in\X$}.
\end{equation}
Then, by the Poincaré inequality \eqref{poincare}, 
\begin{align}
    &\frac{1}{\haus^n(B_r(x))^2}\int_{B_r(x)\cap U}\Big| \int_{B_r(x)\cap U}\big(f(y)-f(z)\big)\de\haus^n(z)\Big| \de\haus^n(y)\\
    &\qquad\le\frac{1}{\haus^n(B_r(x))^2}\int_{B_r(x)\cap U}\Big| \int_{B_r(x)}\big(f(y)-f(z)\big) \de\haus^n(z)\Big|\de\haus^n(y)\\
    &\qquad\qquad+\frac{1}{\haus^n(B_r(x))^2}\int_{B_r(x)\cap U}\Big| \int_{B_r(x)\setminus U}\big(f(y)-f(z)\big) \de\haus^n(z)\Big|\de\haus^n(y)\\
    &\qquad\le C_{PI} \frac{r}{\haus^n(B_r(x))}\int_{B_{2r}(x)} |D f|(y)\de\haus^n(y) +\LIP(f) 2r\frac{\haus^n(B_r(x)\setminus U)}{\haus^n(B_r(x))}.
\end{align}
Now recall that $\varphi$ is bi-Lipschitz, say $L$-bi-Lipschitz; hence, for a constant $C_L$, $\varphi_*(\haus^n\mres U)=\theta_\varphi \mathscr{L}^n\mres B$, where $\theta_\varphi\in (C_L^{-1},C_L)$ a.e. We can thus use the above and obtain
\begin{align}
 & \frac{1}{C_L\haus^n(B_r(x))^2}\int_{B_{r/L}(x')\cap B}\Big| \nabla g(x')\,\cdot\,\int_{\varphi(B_{r}(x)\cap U)}\frac{ y'-z'}{r}\theta_\varphi(z')\de z'\Big| \de y'\\
   &\qquad\le  \frac{1}{\haus^n(B_r(x))^2}\int_{\varphi(B_{r}(x)\cap U)}\Big| \int_{\varphi(B_{r}(x)\cap U)}\frac{\nabla g(x')(y'-z')}{r}\theta_\varphi(z')\de z'\Big| \theta_\varphi (y')\de y'\\
    &\qquad\le \frac{C_L^2\haus^n(\varphi(B_{r}(x)\cap U))^2}{\haus^n(B_r(x))^2}\sup_{z',y'\in \varphi(B_{r}(x)\cap U)} \Big|\frac{g(y')-g(z')-\nabla g(x')(y'-z')}{r}\Big| \\
    &\qquad\qquad+ \frac{1}{\haus^n(B_r(x))^2}\int_{\varphi(B_{r}(x)\cap U)}\Big| \int_{\varphi(B_{r}(x)\cap U)}\frac{g(y')-g(z')}{r}\theta_\varphi(z')\de z' \Big| \theta_\varphi(y') \de y' \\
    &\qquad\le \frac{C_L^2\haus^n(\varphi(B_{r}(x)\cap U))^2}{\haus^n(B_r(x))^2}\sup_{z',y'\in \varphi(B_{r}(x)\cap U)} \Big|\frac{g(y')-g(z')-\nabla g(x')(y'-z')}{r}\Big| \\
    &\qquad\qquad+ C_{PI} \frac{1}{\haus^n(B_r(x))}\int_{B_{2r}(x)} |D f|(y)\de\haus^n(y) +2\LIP(f) \frac{\haus^n(B_r(x)\setminus U)}{\haus^n(B_r(x))}.
\end{align}
We recall \eqref{fdsac} and \eqref{vefdcsc} and we let  $r\downarrow 0$ and we see that, at a.e.\ $x\in U$ (and thus a.e.\ $x'=\varphi(x)\in B$, in particular, density point),
\begin{equation}
    \limsup_{r\downarrow 0}\frac{1}{(\omega_n r^n)^2}\int_{B_{r/L}(x')}\Big| \nabla g(x')\,\cdot\,\int_{\varphi(B_{r}(x)\cap U)}\frac{ y'-z'}{r}\theta_\varphi(z')\de z'\Big| \de y'\le C |D f|(x),
\end{equation}
for a constant $C$ depending only on $C_{PI},C_L$, whose value may change from line to line. We study the left-hand side. We change variables and rewrite 
\begin{equation}
    \limsup_{r\downarrow 0}\dashint_{B_{1/L}(0)}\Big| \nabla g(x')\,\cdot\,\frac{1}{\omega_n r^n}\int_{\varphi(B_{r}(x)\cap U)}\Big(y'-\frac{ z'-x'}{r}\Big)\theta_\varphi (z')\de z'\Big| \de y'\le C |D f|(x).
\end{equation}
Hence, for some vector $v'\in\RR^n$, with $|v'|\le C$,
we have that 
\begin{equation}
    \dashint_{B_{1/L}(0)}|\nabla g(x')\,\cdot\, (y'-v')|\de y'\le C |D f|(x).
\end{equation}
Thus, 
\begin{equation}
    |\nabla g|(x')\le C|D f|(x).
\end{equation}
It is easy to see from \eqref{vtrefdssv}  that, at a.e.\ $x\in U$, $\lip(f_{|U})(x)\le C_L|\nabla g(\varphi (x))|$, which, combined with the inequality above, proves \eqref{adscscdsc}.
%
\end{proof}

\subsection{Metric currents}\label{metrcurr}
We refer to \cite{AmbrosioKirchheim00} for the relevant notions about Ambrosio--Kirchheim metric currents and recall here those that will be most useful to us. 

We remark that in \cite{AmbrosioKirchheim00} and in the present paper, metric currents have, by definition, finite mass. We write $\mathbf{N}_k$ for the space of $k$-dimensional normal currents, i.e., those whose boundary is still a current (in the sense that it has finite mass). A $k$-dimensional metric current $T$ is called rectifiable if $|T|$ (its mass measure) is concentrated on a $k$-rectifiable set and vanishes on $\haus^k$-null sets, and is called integer-rectifiable if it also satisfies the following: For every $\phi:\X\rightarrow\RR^k$ Lipschitz and $A\subseteq \X$ open, we have that the push-forward $\phi_*(T\mres A)$ coincides with the current induced by  some $\theta\in L^1(\RR^k,\mathbb{Z})$. The space of $k$-dimensional integral metric currents, $\mathbf{I}_k$, is the space of integer-rectifiable normal $k$-currents.

We begin by recalling the Ambrosio--Kirchheim boundary-rectifiability theorem, \cite[Theorem 8.6]{AmbrosioKirchheim00}.
\begin{theorem}\label{boundrect} Let $\X$ be a complete metric space. Let $T\in\mathbf{I}_k(\X)$. Then $\partial T\in\mathbf{I}_{k-1}(\X)$.
\end{theorem}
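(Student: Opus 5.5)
Theorem \ref{boundrect} is quoted verbatim from \cite[Theorem 8.6]{AmbrosioKirchheim00}, so within the paper we simply invoke it. If I had to reconstruct the argument, I would follow Ambrosio--Kirchheim's route. First embed $\X$ isometrically into $\ell^\infty$, which is allowed since push-forward under isometries commutes with $\partial$ and preserves integrality. Then prove the statement by induction on $k$, reducing to the Euclidean case via Lipschitz projections, where the classical Federer--Fleming boundary rectifiability holds.

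\textbf{Step 1: normality of $\partial T$.} Since $T\in\mathbf N_k$, we have $\partial T$ of finite mass and $\partial\partial T=0$, so $\partial T\in\mathbf N_{k-1}$. It remains to show that $\partial T$ is integer-rectifiable.

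\textbf{Step 2: rectifiability of $|\partial T|$.} For this I would use slicing. For a Lipschitz $\pi:\X\to\RR$, the slices $\langle T,\pi,t\rangle$ are integral $(k-1)$-currents for a.e.\ $t$; this is proved by induction on $k$, using the fact that slices of integer-rectifiable currents are integer-rectifiable and that their boundaries are $\pm$ slices of $\partial T$. Combining the slicing formula with the rectifiability criterion for normal currents, I would show that $|\partial T|$ vanishes on $\haus^{k-1}$-null sets. The natural tool is the lower density bound $\liminf r^{1-k}|\partial T|(B_r(x))>0$ for $|\partial T|$-a.e.\ $x$, obtained via an isoperimetric/cone-type argument in $\ell^\infty$. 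Once one has both absolute continuity with respect to $\haus^{k-1}$ and concentration on a set of $\sigma$-finite $\haus^{k-1}$ measure, Kirchheim's structure theory \cite{Kir94} gives rectifiability.

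\textbf{Step 3: integer multiplicity.} I would check the definition via push-forwards. For $\phi:\X\to\RR^{k-1}$ Lipschitz and $A$ open, consider $\phi_*(\partial T\mres A)$. Restricting $T$ to suitable sublevel sets of the distance from $\partial A$, which have good slices by coarea, allows one to write $\partial T\mres A$ as $\partial(T\mres A')$ minus a slice. Both pieces push forward to integral currents in $\RR^{k-1}$, using Euclidean boundary rectifiability for the first and the inductive hypothesis for the second, so the density is $\mathbb Z$-valued.

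\textbf{Main obstacle.} The hardest step is Step 2: proving that $|\partial T|$ does not charge $\haus^{k-1}$-null sets and is concentrated on a rectifiable set in a general metric space. This requires the isoperimetric inequality for integral currents in $\ell^\infty$ and careful slicing estimates. I would defer entirely to \cite{AmbrosioKirchheim00} for this.
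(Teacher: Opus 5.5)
Citing \cite[Theorem 8.6]{AmbrosioKirchheim00} is exactly what the paper does: the theorem is quoted there without proof. As far as this paper is concerned, your proposal is therefore complete. The reconstruction you add, however, puts the difficulty in the wrong place, and Step~2 would not work as written.

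\textbf{Problems with Step~2.} First, that $|\partial T|$ does not charge $\haus^{k-1}$-null sets is not an obstacle. It holds for every $S\in\mathbf N_m(\X)$: for Lipschitz $\pi\colon\X\to\RR^m$ one has $S\mres d\pi=\int_{\RR^m}\langle S,\pi,y\rangle\de y$, the slices are concentrated on $\pi^{-1}(y)$, and $\mathscr L^m(\pi(B))=0$ whenever $\haus^m(B)=0$. Since $|S|$ is controlled by the measures $|S\mres d\pi|$ with $\pi$ $1$-Lipschitz, the claim follows.

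Second, absolute continuity together with concentration on an $\haus^{k-1}$-$\sigma$-finite set does not give rectifiability through Kirchheim's structure theory. The measure $\haus^1$ restricted to the four-corner Cantor set in $\RR^2$ has both properties and is purely unrectifiable. Rectifiability has to come from the current structure, not from measure theory.

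Third, isoperimetric or cone inequalities bound fillings from above. They yield lower density bounds only under a minimality-type assumption, so they give no lower density information on $|\partial T|$ for an arbitrary $T\in\mathbf I_k(\X)$.

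\textbf{The missing idea.} Slicing transfers integrality from $T$ to $\partial T$. For Lipschitz $\pi\colon\X\to\RR^{k-1}$ and a.e.\ $y$, one has $\langle T,\pi,y\rangle\in\mathbf I_1(\X)$ and $\langle\partial T,\pi,y\rangle=\pm\partial\langle T,\pi,y\rangle$. By the case $k=1$ (push-forwards to $\RR$ of integral $1$-currents are integer-valued $BV$ functions), almost every $0$-dimensional slice of $\partial T$ is therefore a finite integer combination of Dirac masses.

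What then remains is a rectifiability criterion for normal currents whose $0$-dimensional slices are integral. This is where the real work in \cite{AmbrosioKirchheim00} lies. One can exploit, for instance, the bounded variation of $y\mapsto\langle\partial T,\pi,y\rangle$ in the flat norm, which prevents integer atoms from disappearing abruptly.

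Your Step~3 is essentially fine. Note only that $\phi_\#(T\mres A')$ is a $k$-current in $\RR^{k-1}$ and hence vanishes, so no Euclidean boundary rectifiability is needed there.
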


We now fix some  notation for an $n$-rectifiable metric measure space $(\X,\d,\haus^n)$. Rectifiability means that there exist $(f_i)_{i\in\NN}$ Lipschitz with $f_i:K_i\rightarrow \X$, where each $K_i\subseteq\RR^n$ is a compact and $\haus^n\big(\X\setminus\bigcup_i f_i(K_i)\big)=0$.  By \cite[Lemma 4]{Kir94} (and the second paragraph of the proof of \cite[Theorem 7]{Kir94}), there is no loss of generality in assuming that the $f_i$ are bi-Lipschitz onto their images. Now fix $i$, i.e., consider $f:K\rightarrow\X$ bi-Lipschitz onto its image.

 It is well known, via Kuratowski embedding, that $(\X,\d)$ isometrically embeds into $\ell^\infty$, which is a $w^*$-separable dual space. In what follows we shall always embed the space $(\X,\d)$ into $\ell^\infty$ using such an embedding. Notice that in this case we can assume that $f$ is defined on $\RR^n$.
It is proved in \cite[Theorem 2]{Kir94} and \cite[Theorem 3.5]{AmbKir00} that for a.e.\ $x\in K$, there exist a linear map $wd_xf:\RR^n\rightarrow \ell^\infty $ and a seminorm $md_xf:\RR^n\rightarrow \RR$ (called metric differential) such that  
\begin{align}
    w^*\lim_{y\rightarrow x}\frac{f(y)-f(x)-wd_xf(y-x)}{|y-x|}=0,\\
    \lim_{|y-x|+|z-x|\rightarrow 0}\frac{\big|\d(f(y),f(z))-md_x f(y-z)\big|}{|y-x|+|z-x|}=0,\label{second}
    \end{align}
and satisfying
\begin{equation}
        md_x f(v)=\| wd_x f(v)\|_{\ell^\infty}\qquad\text{for every }v\in\RR^n.
\end{equation}
For a.e.\ $x\in K$, \cite{Kir94} defines
\begin{equation}
    \Tan^{(n)}(\X,f(x))\defeq wd_x f(\RR^n)
\end{equation}
and proves that this definition is well posed. The space $\Tan^{(n)}(\X,f(x))$ is naturally endowed with the norm induced by $md_x f$ and  with the corresponding Hausdorff  measure. During the proof of \cite[Theorem 7]{Kir94}, it is proved that if $B\subseteq K$ is a Borel set such that $md_x f$ is a seminorm but not a norm for a.e.\ $x\in B$, then $\haus^n(f(B))=0$. It follows that we can assume with no loss of generality that $md_x f$ is a norm for a.e.\ $x\in K$.

We now introduce  a simple yet useful result.
\begin{prop}\label{maincurrent}
Let $(\X,\d,\haus^n)$ be an $n$-rectifiable metric measure space supporting a Poincaré inequality, see \eqref{poincare}. Assume moreover that it is infinitesimally Hilbertian.
    Let $T$ be an $n$-dimensional  metric current on $(\X,\d)$ satisfying $|T|=\theta \haus^n$, where $\theta:\X\rightarrow\mathbb{N}$ is a Borel function. Then $T$ is integer-rectifiable. 
\end{prop}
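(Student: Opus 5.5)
The plan is to reduce the statement to the Ambrosio--Kirchheim representation of rectifiable currents, and then to use infinitesimal Hilbertianity to show that the real multiplicity of that representation coincides with $\theta$.

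\textbf{Rectifiability.} This part is immediate. Since $|T|=\theta\haus^n$ and $(\X,\d,\haus^n)$ is $n$-rectifiable, $|T|$ is concentrated on an $n$-rectifiable set and vanishes on $\haus^n$-null sets. Using the bi-Lipschitz charts $f_i:K_i\to\X$ fixed above, with $md_xf_i$ a norm for a.e.\ $x$, the representation theorem for rectifiable currents of \cite{AmbrosioKirchheim00} gives
\begin{equation*}
T\mres f_i(K_i)=(f_i)_\#\big[\!\big[\vartheta_i\big]\!\big],\qquad \vartheta_i\in L^1(\RR^n),
\end{equation*}
where $\vartheta_i$ is real valued (orientation absorbed into its sign). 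The mass satisfies
\begin{equation*}
|T|\mres f_i(K_i)=(f_i)_*\big(|\vartheta_i|\,\mathbf{J}(md_xf_i)\,\mathscr L^n\big),
\end{equation*}
with $\mathbf{J}$ the Jacobian of the norm $md_xf_i$. By the area formula, $(f_i)_*(\mathbf{J}(md_xf_i)\mathscr L^n)=\haus^n\mres f_i(K_i)$. However, the Ambrosio--Kirchheim mass of a current on a normed $n$-space equals $\lambda\,|\vartheta|\,\haus^n$, where $\lambda\in(0,1]$ is an area factor depending on the norm. Hence $\theta\circ f_i=\lambda_x|\vartheta_i|$, where $\lambda_x$ is the area factor of $(\RR^n,md_xf_i)$. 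The key fact is that $\lambda_x=1$ whenever the norm is Euclidean, i.e.\ induced by a scalar product. So the proof reduces to showing that $md_xf_i$ is a Hilbert norm for a.e.\ $x$. Granting this, $|\vartheta_i|=\theta\circ f_i\in\NN$ a.e.

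\textbf{Integrality of push-forwards.} Given $\varphi:\X\to\RR^n$ Lipschitz and $A$ open, write
\begin{equation*}
\varphi_*(T\mres A)=\sum_i(\varphi\circ f_i)_\#\big[\!\big[\vartheta_i\chi_{f_i^{-1}(A)}\big]\!\big].
\end{equation*}
Each summand is a Lipschitz push-forward of an integer-multiplicity current on $\RR^n$. By the Euclidean area formula it is induced by $y\mapsto\sum_{x\in(\varphi\circ f_i)^{-1}(y)}\pm\vartheta_i(x)$, which is integer valued. Summing (the series converges in mass) gives an $L^1(\RR^n,\mathbb Z)$ density, so $T$ is integer-rectifiable.

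\textbf{Main obstacle: the norm $md_xf$ is Hilbertian a.e.} Here I would use Theorem \ref{dimtan} and its proof. The chart $(U,\varphi)=(f(K),f^{-1})$ is a weak chart of dimension $n$, and $L^2(T\X)\simeq\Gamma_2(T\X)$ isometrically. In $\Gamma_2$, the fibre norm at $x$ is the dual of $L\mapsto\lip(L\circ\varphi_{|U})(x)$. By \eqref{second}, applied at density points of $K$, $\lip(L\circ f^{-1}_{|U})(f(x))$ equals the dual norm $(md_xf)^*(L)$, so the fibre norm of $\Gamma_2$ at $f(x)$ is $md_xf$ itself.

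On the other hand, the pointwise norm of $L^2(T\X)$ is induced by a scalar product a.e., because of infinitesimal Hilbertianity (the pointwise parallelogram identity for $|Df|$). An isometric isomorphism of normed modules preserves pointwise norms a.e. I would pass to a countable dense set of elements and use $L^\infty$-linearity to deduce that a.e.\ fibre norm of $\Gamma_2$ satisfies the parallelogram law, hence so does $md_xf$.

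The delicate point is making this fibrewise transfer rigorous, i.e.\ checking that the isometry of \cite{IPSabs} identifies the fibre norms a.e.\ and not merely the integrated ones. If that identification is not available directly, I would instead use \eqref{adscscdsc} together with a Cheeger-type identification $\lip(f_{|U})=|Df|$ a.e.\ under the Poincaré inequality, and deduce the parallelogram law for $L\mapsto\lip(L\circ\varphi_{|U})$ from that of $|Df|$.
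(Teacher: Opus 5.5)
Your proposal is correct and follows essentially the paper's argument. You identify the fibre norm of $\Gamma_2(T\X)$ on the chart $(f(K),f^{-1})$ with $md_xf$, transfer the Hilbert structure from $L^2(T\X)$ through Theorem \ref{dimtan}, deduce $\lambda\equiv 1$, and conclude that the Ambrosio--Kirchheim multiplicity is $|\vartheta_i|=\theta\circ f_i\in\NN$. Your countable-dense-set transfer is valid, and the fallback is unnecessary, because an $L^\infty$-linear isometry automatically preserves pointwise norms. The only difference from the paper is that you verify integrality of push-forwards directly via the Euclidean area formula instead of invoking \cite[Theorem 4.5]{AmbrosioKirchheim00}.

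You should delete your first mass display, which omits the factor $\lambda$; if it were true, it would make the Hilbertian step superfluous. Also delete the claim $\lambda\in(0,1]$, which is false (e.g.\ $\lambda=4/\pi$ for the $\ell^\infty$ norm on $\RR^2$). Neither affects the final argument.
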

\begin{proof}
We use the terminology of Section \ref{essdim} and we recall that Theorem \ref{dimtan} applies.
Let $(U,\varphi)$ be any weak chart,  where $\varphi:U\rightarrow K\subseteq\RR^n$ is bi-Lipschitz. To be more precise, $\varphi:U\rightarrow\RR^n$ is in principle only Lipschitz, but one can use the short argument at the beginning of the proof of  Theorem \ref{dimtan} to have this seemingly stronger characterization. Our first step is to prove that the metric differential of $\varphi^{-1}$ is  a.e.\ induced by a scalar product, see \eqref{toshow}.

Let now $L:\RR^n\rightarrow\RR$ be linear and consider, for $\haus^n$-a.e.\ $x\in U$, where $y=\varphi(x)$,
\begin{align}
    \lip({L\circ \varphi}_{|U})(x)&=\limsup_{U\ni x'\rightarrow x}\frac{|L(\varphi(x'))-L(\varphi(x))|}{\d(x',x)}=\limsup_{K\ni y'\rightarrow y}\frac{|L(y')-L(y)|}{\d(\varphi^{-1}(y'),\varphi^{-1}(y))}\\
    &=\limsup_{K\ni y'\rightarrow y}\frac{|L(y')-L(y)|}{|y'-y|}\frac{|y'-y|}{\d(\varphi^{-1}(y'),\varphi^{-1}(y))}.
\end{align}
Thanks to \eqref{second}, we see that at $\haus^n$-a.e.\ $x\in U$,
\begin{equation}
    \lip({L\circ \varphi}_{|U})(x)=\limsup_{K\ni y'\rightarrow y}\frac{|L(y')-L(y)|}{|y'-y|}\frac{|y'-y|}{md_y(\varphi^{-1})(y'-y)},
\end{equation}
where we used that $\varphi$ is bi-Lipschitz and that the metric differential of $\varphi^{-1}$ is a norm at $y$. At density points of $K$, we thus see that 
\begin{equation}
    \lip({L\circ \varphi}_{|U})(x)=\sup_{v\in\RR^n\setminus\{0\}}\frac{|L(v)|}{md_y(\varphi^{-1})(v)}.
\end{equation}
By definition, $T_{\mathcal{A}}\X$ is endowed with the dual norm of $L\mapsto \lip({L\circ\varphi}_{|U})$ (for $x\in U$), which is precisely $md_{y}(\varphi^{-1})$, by the above equation. By Theorem \ref{dimtan}, $\Gamma_2(T\X)\simeq L^2(T\X)$, which is a Hilbert module by the infinitesimal Hilbertianity assumption. 
Thus, we have shown that
\begin{equation}\label{toshow}
    md_y(\varphi^{-1})\text{ is induced by a scalar product for a.e.\ $y\in K$}.
\end{equation}

We now recall the area factor considered in \cite[Equation (9.11)]{AmbrosioKirchheim00}: If $V$ is an $n$-dimensional Banach space, then
\begin{equation}
    \lambda_V\defeq \frac{2^n}{\omega_n}\sup\Big\{\frac{\haus^n(B_1)}{\haus^n(R)}:V\supseteq R\supseteq B_1,\text{ $R$ parallelepiped}\Big\},
\end{equation}
where $B_1$ is the unit ball of $V$ and $R$, the parallelepiped, is any image of the Euclidean cube through a linear map $\RR^n\rightarrow V$. We will be interested only in the case where $V=\Tan^{(n)}(\X,x)$, endowed with the norm induced by the metric differential.
We know from \cite[Lemma 9.2]{AmbrosioKirchheim00} that $\lambda_V=1$ whenever the norm on $V$ is induced by a scalar product. Hence, by \eqref{toshow},
\begin{equation}\label{cedscdcscd}
   \lambda(x)\defeq \lambda_{\Tan^{(n)}(\X,x)}=1\qquad\text{for $\haus^n$-a.e.\ $x\in\X$}.
\end{equation}

By the rectifiability of the space and the assumption on $|T|$, we can apply \cite[Theorem 9.1]{AmbrosioKirchheim00} to deduce that $T$ is induced by a triplet $(S,\theta',\tau)$, where $S$ is $n$-rectifiable, $\theta':S\rightarrow (0,\infty)$ is Borel and $\tau$ is an orientation of $S$ (see \cite[Section 9]{AmbrosioKirchheim00}). Applying  \cite[Theorem 9.5]{AmbrosioKirchheim00} with \eqref{cedscdcscd}, we see that $|T|=\theta'\lambda\haus^n\mres S=\theta'\haus^n\mres S$.  Recalling the assumption on $|T|$, we deduce that $\theta'=\theta\ \haus^n$-a.e. Now, inspecting the proof of \cite[Theorem 9.1]{AmbrosioKirchheim00} (see in particular \cite[Equation (9.10)]{AmbrosioKirchheim00}), we see that $T$ is integer-rectifiable by \cite[Theorem 4.5]{AmbrosioKirchheim00}.
\end{proof}

\section{Proof of Theorem \ref{main2}}
In this section, we fix a space $(\X,\d,\haus^n)$ as in the statement of Theorem \ref{main2}. In particular, $(\X,\d,\haus^n)$  satisfies a Poincaré inequality, see \eqref{poincare},   thanks to \cite[Theorem 1.2]{Rajala12-2} (see also \cite[Theorem 1]{Rajala12}).
Hence, by Theorem \ref{dimtan}, we know that $L^2(T\X)$ has constant dimension $n$. Our key tool is the following.

\begin{thm}\label{intermediate}
Let $(\X,\d,\haus^n)$ be as in the statement of Theorem \ref{main2}.
    Assume that  $E\subseteq\X$ is a bounded set of finite perimeter and let $u_1,\dots,u_n\in\Test(\X)$  be such that 
\begin{equation}\label{eqdeterminant}
    \det\big((\nabla u_i\,\cdot\,\nabla u_j)_{i,j=1,\dots,n})\big)\ge c\qquad\haus^n\text{-a.e.\ on $E$}
\end{equation}
for some $c\in(0,1)$.
Then,  for every $i=1,\dots,n$,
\begin{equation}
    \tr \hess u_i=\Delta u_i\qquad\haus^n\text{-a.e.\ on }E.
\end{equation}
\end{thm}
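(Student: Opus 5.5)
The plan is to encode $E$, together with an orthonormal frame adapted to $u_1,\dots,u_n$, as an $n$-dimensional metric current $T$ with $|T|=\chi_E\haus^n$. Proposition \ref{maincurrent} then makes $T$ integer-rectifiable. The defect $\tr\nabla\alpha-\dive\alpha$ will show up as the part of $\partial T$ that is absolutely continuous with respect to $\haus^n$, and Theorem \ref{boundrect} will force it to vanish.

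\emph{Construction of the frame and of $T$.} Since the Gram determinant \eqref{eqdeterminant} is $\ge c$ on $E$, I Gram--Schmidt orthonormalize $\nabla u_1,\dots,\nabla u_n$ on $E$. Before dividing, I truncate every Gram-type denominator from below by a positive constant via a smooth function. The resulting $V_1,\dots,V_n$ are of the form $\sum_j a_{ij}\nabla u_j$, with coefficients $a_{ij}$ that are smooth functions of the $\nabla u_k\cdot\nabla u_l\in H^{1,2}\cap L^\infty$. By Lemma \ref{remaininH}, $V_i\in H^{1,2}_H(T\X)\cap L^\infty(T\X)$, and $V_i\cdot V_j=\delta_{ij}$ on $E$. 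With $\omega\defeq V_1\wedge\cdots\wedge V_n$, I define
\[
T(f\,d\pi_1\wedge\cdots\wedge d\pi_{n-1}\wedge d\pi_n)\defeq\int_E f\,(d\pi_1\wedge\cdots\wedge d\pi_n)\cdot\omega\de\haus^n ,
\]
first for test functions and then for Lipschitz ones. The usual axioms of metric currents follow from locality and $w^*$-continuity of differentials. Since $\omega$ is a unit $n$-vector and $L^2(T\X)$ has dimension $n$ (Theorem \ref{dimtan}), we get $|T|=\chi_E\haus^n$.

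\emph{Boundary computation.} Next I compute $\partial T(f\,d\pi_1\wedge\cdots\wedge d\pi_{n-1})=\int_E d(f\,d\pi_1\wedge\cdots\wedge d\pi_{n-1})\cdot\omega$. By Proposition \ref{fdcsc} this equals $\int_E\tr(\nabla\alpha)$, with $\alpha\in H^{1,2}_H\cap L^\infty$ as in \eqref{defalpha}. I split $\int_E\tr\nabla\alpha=\int_E\dive\alpha+\int_E(\tr\nabla\alpha-\dive\alpha)$. By the Gauss--Green formula for sets of finite perimeter on $\RCD$ spaces, the first term is $-\int\alpha\cdot\nu_E\,\d|D\chi_E|$; since $|\alpha|\le C|f|\prod\Lip(\pi_j)$, it is bounded by a finite measure. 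The second term is bounded by $\int_E|f|\,|\nabla\alpha|$, which again gives a mass bound. Together these show $T\in\mathbf N_n(\X)$ (this should be Proposition Tint). Proposition \ref{maincurrent} then gives $T\in\mathbf I_n$, and Theorem \ref{boundrect} gives $\partial T\in\mathbf I_{n-1}$. Hence $|\partial T|$ is concentrated on an $(n-1)$-rectifiable, $\haus^n$-null set. The Gauss--Green part is concentrated on an $\haus^n$-null set by Lemma \ref{persing}, while the defect part is absolutely continuous with respect to $\haus^n$. Restricting to that null set (where the defect term vanishes, leaving only the Gauss--Green part) and to its complement (where only the defect term survives), I conclude that $\int_E g\,(\tr\nabla\alpha-\dive\alpha)=0$ for all such $\alpha$ and all bounded Borel $g$. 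This yields $\tr\nabla\alpha=\dive\alpha$ a.e.\ on $E$.

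\emph{Passing to $\nabla u_i$, and the expected obstacle.} The defect $D(X)\defeq\tr\nabla X-\dive X$ is $C^0$-linear: $D(gX)=gD(X)$ for $g\in S^2\cap L^\infty$, because the $\nabla g\cdot X$ terms cancel. Taking $f=1$ and $\pi_j=u_k$ for $k\ne i$, the field $\alpha$ is the Hodge star of $du_1\wedge\cdots\widehat{du_i}\cdots\wedge du_n$. On $E$ this equals $\sum_k c_{ik}\nabla u_k$, where $(c_{ik})$ is essentially the cofactor matrix of the Gram matrix, written through the $a_{ij}$. That matrix is invertible on $E$ with inverse bounded and built from $H^{1,2}\cap L^\infty$ functions after truncation. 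Inverting the relation, using $C^0$-linearity and the locality Lemmas \ref{loc1} and \ref{loc2} to replace fields by fields agreeing on $E$, I get $D(\nabla u_i)=0$ on $E$, i.e.\ $\tr\hess u_i=\Delta u_i$. I expect the main obstacle to be the boundary step: making the current rigorous (well-definedness and continuity on Lipschitz forms) and, above all, justifying the normal-mass bound and the clean separation of $\partial T$ into its $\haus^n$-singular and $\haus^n$-absolutely continuous parts. The final linear algebra should be routine.
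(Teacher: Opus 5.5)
Your overall architecture matches the paper's: the Gram--Schmidt frame, the current $T$ with $|T|=\haus^n\mres E$, the splitting of $\partial T$ via Proposition \ref{fdcsc} into a Gauss--Green term and a defect term, integer rectifiability via Proposition \ref{maincurrent}, and then Theorem \ref{boundrect} with Lemma \ref{persing} to kill the defect. However, the step you flag as the main obstacle is, as written, wrong.

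\textbf{The gap: the mass bound and the separation.} Bounding the defect term by $\int_E|f|\,|\nabla\alpha|$ (or by $\int_E|\nabla\alpha|$) is not a mass bound. The field $\alpha$ depends on $f$ and on $d\pi_1,\dots,d\pi_{n-1}$, so $\nabla\alpha$ contains $\nabla f$ and the Hessians of the $\pi_j$. It cannot be controlled by $\Lip(\pi_1)\cdots\Lip(\pi_{n-1})\int|f|\de\mu$ for a fixed finite measure $\mu$, which is what finiteness of the mass of $\partial T$ requires. For the same reason, the claim that ``the defect part is absolutely continuous with respect to $\haus^n$'' has no meaning at that stage. A priori the defect functional is first order in $f$ and second order in the $\pi_j$, not a measure. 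So you cannot yet split $|\partial T|$ into a singular piece and an absolutely continuous piece and restrict to the null set and its complement.

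\textbf{The fix: use $C^0$-linearity first.} You already have the missing ingredient, the $C^0$-linearity of the defect, but you invoke it only at the end. It must be used first, in tensorial form. The field $\tilde\alpha\defeq\sum_i(\alpha\cdot V_i)V_i$ coincides with $\alpha$ on $E$, so Lemmas \ref{loc1} and \ref{loc2} give
\[
\mathscr D(\alpha)=\sum_i(\alpha\cdot V_i)\,\mathscr D(V_i)\qquad\haus^n\text{-a.e.\ on }E,
\]
which is the paper's Proposition \ref{vfdssc}. This makes the defect zeroth order in $\alpha$, so on $E$
\[
|\mathscr D(\alpha)|\le n|f|\Lip(\pi_1)\cdots\Lip(\pi_{n-1})\sum_i|\mathscr D(V_i)|.
\]
Since $\mathscr D(V_i)\in L^2$ and $E$ is bounded, this yields
\[
|\partial T|\le n\Big(\sum_i|\mathscr D(V_i)|\,\haus^n\mres E+|D\chi_E|\Big),
\]
hence $T\in\mathbf N_n(\X)$.

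The same identity shows that the defect term equals $\int_E f\,(d\pi_1\wedge\cdots\wedge d\pi_{n-1})\cdot\beta\de\haus^n$ for a fixed $L^1$ field $\beta=\chi_E\sum_i(-1)^{i-1}\mathscr D(V_i)\,\hat\omega_i$. This is what makes your separation argument legitimate. Alternatively, as the paper does, replace $f$ by $fg$ with $g\in\Test(\X)$ and use that $|\partial T|$ and $|D\chi_E|$ are concentrated on $\haus^n$-null sets.

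With this reordering your final cofactor inversion becomes unnecessary. Varying the $\pi_j$ gives $\beta=0$, hence $\mathscr D(V_i)=0$ on $E$. Then $\mathscr D(\nabla u_k)=\sum_i(\nabla u_k\cdot V_i)\,\mathscr D(V_i)=0$ on $E$.
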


\begin{proof}[Proof of Theorem \ref{main2} given  Theorem \ref{intermediate}] Fix $u\in\Test(\X)$. Since $L^2(T\X)$ has dimension $n$, we can find a covering $\{|\nabla u|>0\}\subseteq\bigcup_{k\in\NN} E_k$ such that, for every $k$, there exist $u_{2,k},\dots ,u_{n,k}\in\Test(\X)$ such that $\nabla u_{1,k},\dots ,\nabla  u_{n,k}$ form a basis of $L^2(T\X)$ on $E_k$, where $u_{1,k}\defeq u$.
Hence,
\begin{equation}
   \det\big((\nabla u_{i,k}\,\cdot\,\nabla u_{j,k})_{i,j=1,\dots,n})\big)\ne 0\qquad\haus^n\text{-a.e.\ on $E_k$}.
\end{equation}
By the coarea formula, the set     $\Big\{\det\big((\nabla u_{i,k}\,\cdot\,\nabla u_{j,k})_{i,j=1,\dots,n})\big) >s\Big\}$ has finite perimeter for a.e.\ $s$. Hence, up to changing the covering,  we can assume in addition that  $E_k$ has finite perimeter and that
\begin{equation}
   \det\big((\nabla u_{i,k}\,\cdot\,\nabla u_{j,k})_{i,j=1,\dots,n})\big)>c_k\qquad\haus^n\text{-a.e.\ on $E_k$},
\end{equation}
for some $c_k>0$.
 Moreover, as in the proof of Lemma \ref{persing}, we can assume with no loss of generality that $E_k$ is bounded.

Now fix $k$. By Theorem \ref{intermediate}, \begin{equation}
    \tr\hess u=\Delta u\ \qquad\haus^n\text{-a.e.\ on } E_k.
\end{equation}
Since $(E_k)_k$ forms a covering of $\{|\nabla u|>0\}$,  the claim follows on $\{|\nabla u|>0\}$. Lemmas \ref{loc1} and \ref{loc2} conclude the proof on $\{|\nabla u|=0\}$.
\end{proof}

\subsection{Proof of Theorem \ref{intermediate}}
Throughout this section, we work in the setting of the statement of Theorem \ref{intermediate}. Namely, $(\X,\d,\haus^n)$ is as in the statement of Theorem \ref{main2}, and we have a bounded set $E\subseteq\X$  of finite perimeter and  $u_1,\dots,u_n\in\Test(\X)$ satisfying \eqref{eqdeterminant} for some $c\in (0,1)$.

\bigskip

First, we build $n$ regular vector fields $V_1,\dots,V_n$ which form an orthonormal basis on $E$, through a rather classical Gram--Schmidt procedure.
\begin{prop}\label{GS}
   There exist  $V_1,\dots,V_n\in H^{1,2}_H(T\X)\cap L^{\infty}(T\X)$ such that for $i,j=1,\dots,n$, $V_i\,\cdot\,V_j=\delta_{i,j} \ \haus^n$-a.e.\ on  $E$ and $|V_i|\le 1\ \haus^n$-a.e.\ on $\X$.
\end{prop}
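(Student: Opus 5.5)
We run Gram--Schmidt on the gradients $\nabla u_1,\dots,\nabla u_n$, replacing the division by norms with division by Lipschitz truncations that stay away from zero, so that Lemma \ref{remaininH} can be applied at each step.

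\emph{Step 1: lower bounds on $E$.} Since the $u_i$ are test functions, $\nabla u_i\in H^{1,2}_H(T\X)\cap L^\infty(T\X)$, and $g_{ij}\defeq\nabla u_i\,\cdot\,\nabla u_j\in H^{1,2}(\X)\cap L^\infty(\X)$. Set $M\defeq\max_i\||\nabla u_i|\|_{L^\infty}$. The Gram matrix $G$ has determinant at least $c$ on $E$ and entries bounded by $M^2$. Hence all its eigenvalues are bounded below by some $\lambda=\lambda(c,M,n)>0$ on $E$.

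\emph{Step 2: the recursion.} Set $W_1\defeq\nabla u_1$ and
\[
W_k\defeq\nabla u_k-\sum_{j<k}(\nabla u_k\,\cdot\,V_j)\,V_j .
\]
On $E$, $W_k$ is the orthogonal projection of $\nabla u_k$ onto the complement of $\mathrm{span}(\nabla u_1,\dots,\nabla u_{k-1})$. Therefore $|W_k|^2$ is a ratio of consecutive leading principal minors of $G$, and so $|W_k|^2\ge \lambda_k>0$ on $E$ for some constant $\lambda_k$ depending on $c,M,n$.

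Fix a smooth $\psi_k:[0,\infty)\to[\lambda_k/2,\infty)$ with $\psi_k(t)=\sqrt t$ for $t\ge\lambda_k$ and $\psi_k(t)\ge\sqrt t$ everywhere. Define
\[
V_k\defeq \psi_k(|W_k|^2)^{-1}W_k .
\]
Then $V_k=W_k/|W_k|$ on $E$, which gives the orthonormality $V_i\,\cdot\,V_j=\delta_{i,j}$ on $E$ by the usual Gram--Schmidt algebra. Moreover $|V_k|\le1$ everywhere, because $\psi_k(t)\ge\sqrt t$.

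\emph{Step 3: regularity, by induction on $k$.} Assume $V_1,\dots,V_{k-1}\in H^{1,2}_H(T\X)\cap L^\infty(T\X)$.
\begin{itemize}
\item The scalar coefficients $\nabla u_k\,\cdot\,V_j$ are bounded. We need them in $S^2(\X)$, using $\nabla u_k\in H^{1,2}_C(T\X)$ and $V_j\in H^{1,2}_C(T\X)$ (recall $H^{1,2}_H\subseteq H^{1,2}_C$ on $\RCD$ spaces) and the Leibniz rule for the scalar product of two $W^{1,2}_C\cap L^\infty$ fields.
\item Lemma \ref{remaininH} then gives $W_k\in H^{1,2}_H\cap L^\infty$. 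The same Leibniz argument shows $|W_k|^2\in S^2\cap L^\infty$.
\item Since $1/\psi_k$ is Lipschitz and bounded on the range of $|W_k|^2$, the chain rule gives $\psi_k(|W_k|^2)^{-1}\in S^2\cap L^\infty$. A second application of Lemma \ref{remaininH} yields $V_k\in H^{1,2}_H(T\X)\cap L^\infty(T\X)$.
\end{itemize}

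\emph{Main obstacle.} The delicate point is the membership $f\defeq X\,\cdot\,Y\in S^2(\X)$ for $X,Y\in H^{1,2}_H\cap L^\infty$, which is the input Lemma \ref{remaininH} requires. I would argue via the inclusion $H^{1,2}_H\subseteq H^{1,2}_C$ together with Gigli's compatibility of $\nabla$ with the metric: $d(X\cdot Y)=\nabla X(\cdot,Y)+\nabla Y(\cdot,X)$ for $X,Y\in H^{1,2}_C\cap L^\infty$. This gives $|d f|\le |\nabla X||Y|+|\nabla Y||X|\in L^2$. If this identity is only available for test fields, I would approximate $X$ and $Y$ by test fields with uniformly bounded $L^\infty$ norms and pass to the limit using the closedness of the differential.
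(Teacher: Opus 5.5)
Your proposal is correct and follows essentially the paper's proof. It runs Gram--Schmidt on $\nabla u_1,\dots,\nabla u_n$ with the normalization truncated away from zero (your $\psi_k(|W_k|^2)$ plays the role of the paper's $m_i\vee\eta$), gets the lower bound on $E$ from the ratio of consecutive Gram minors (the paper writes this as $m_i=|\nabla u_1\wedge\cdots\wedge\nabla u_i|/|\nabla u_1\wedge\cdots\wedge\nabla u_{i-1}|$), and obtains regularity from Lemma \ref{remaininH}. Your Step 3 usefully spells out why the coefficients lie in $S^2$, which the paper leaves implicit. The only slip is cosmetic: $\psi_k$ cannot take values in $[\lambda_k/2,\infty)$ and also equal $\sqrt{\lambda_k}$ at $t=\lambda_k$ when $\lambda_k>4$, so use a lower bound such as $\sqrt{\lambda_k/2}$ instead. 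Alternatively take simply $\sqrt t\vee\sqrt{\lambda_k}$, since only Lipschitz regularity of $1/\psi_k$ is needed.
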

\begin{proof}
The proof is a minimal modification of the classical Gram--Schmidt procedure, where the standard case corresponds to  $\eta=0$ below.
We set $L\defeq \max_{i}\|\nabla u_i\|_{L^\infty}$ and then $\eta\defeq\frac{\sqrt{c}}{L^{n-1}}$. By \eqref{eqdeterminant},
\begin{equation}\label{vefdcs}
        \sqrt{c}\le |\nabla u_1\wedge\cdots\wedge\nabla u_n|      \le |\nabla u_1\wedge\cdots\wedge \nabla u_i| L^{n-i}\qquad\haus^n\text{-a.e.\ on $E$}.
\end{equation}

We define
\begin{equation}\label{vfdscs0}
    \tilde V_1\defeq \nabla u_1,\qquad m_1\defeq |\tilde V_1|,\qquad V_1\defeq \frac{\tilde V_1}{m_1\vee \eta},
\end{equation} and, for $i=2,\dots,n$, we define inductively 
\begin{equation}
    \tilde V_i\defeq \nabla u_i-\sum_{j=1}^{i-1}(\nabla u_i\,\cdot\, V_j)V_j,\qquad m_i\defeq|\tilde V_i|,\qquad V_i\defeq\frac{\tilde V_i}{m_i\vee \eta}.
\end{equation}
Regularity of the $V_i$ follows from Lemma \ref{remaininH}. 
Now we want to show by induction that, for every $i=1,\dots,n$,
\begin{equation}\label{vfdscs}
    m_i\ge \eta\quad\haus^n\text{-a.e.\ on $E$}\qquad\text{and}\qquad V_i=\frac{\tilde V_i}{m_i}\quad\haus^n\text{-a.e.\ on $E$},
\end{equation}
which  concludes the proof by the calculus rules.

For $i=1$,  this is an immediate consequence of \eqref{vefdcs}. For the inductive step, take $i=2,\dots,n$. 
As computed in \cite[Proof of Proposition 5.13]{BBPorient}, using the second statement in  \eqref{vfdscs} (for $j=1,\dots,i-1$) we have that 
\begin{equation}
    m_i=\frac{|\nabla u_1\wedge\cdots\wedge\nabla u_i|}{|\nabla u_1\wedge\cdots\wedge\nabla u_{i-1}|}\qquad\haus^n\text{-a.e.\ on $E$}.
\end{equation}
The first statement in  \eqref{vfdscs} follows then from \eqref{vefdcs}, and then also the second one follows immediately by the definition of $V_i$.
\end{proof}
We define the ``volume form''  $\omega\defeq V_1\wedge\cdots \wedge V_n$ and we notice that by Proposition \ref{GS}, we have 
\begin{equation}
    |\omega|=1 \qquad\haus^n\text{-a.e.\ on }E.
\end{equation}
Recall that our goal is to show that $\tr \hess f=\Delta f$, i.e.\ $\tr(\nabla( \nabla f))=\dive (\nabla f)$. We therefore introduce the  deficit operator
\begin{equation}
    \mathscr{D}:H^{1,2}_H(T\X)\rightarrow L^2(\X)\qquad  \mathscr{D}(V)\defeq \tr \nabla V-\dive V
\end{equation}
and our ultimate goal is to prove that it vanishes. The first step is to show that it is linear, in the following strong sense. This will imply, in particular, that $\mathscr{D}$ is continuous.
\begin{prop}\label{vfdssc}
    For every $V\in H^{1,2}_H(T\X)\cap L^\infty(T\X)$,
    \begin{equation}
        \mathscr{D}(V)=\sum_{i=1}^n (V\,\cdot\, V_i ) \mathscr{D}(V_i)\qquad\haus^n\text{-a.e.\ on $E$}.
    \end{equation}
\end{prop}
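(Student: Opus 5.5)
We decompose $V$ in the orthonormal frame and use the Leibniz rules for $\tr\nabla$ and $\dive$. Set $g_i\defeq V\,\cdot\,V_i$. Since $V,V_i\in H^{1,2}_H(T\X)\cap L^\infty(T\X)$, the functions $g_i$ are bounded and lie in $S^2(\X)$. To see this, write $d(V\cdot V_i)$ via the covariant derivatives of $V$ and $V_i$, using $H^{1,2}_H\subseteq H^{1,2}_C$ (Bochner) and the compatibility of $\nabla$ with the metric \cite[Proposition 3.4.6]{Gigli14}. We then define
\begin{equation}
    W\defeq \sum_{i=1}^n g_iV_i .
\end{equation}
By Lemma \ref{remaininH}, $W\in H^{1,2}_H(T\X)\cap L^\infty(T\X)$. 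Because $(V_i)_i$ is orthonormal on $E$ and $L^2(T\X)$ has dimension $n$, we get $W=V$ $\haus^n$-a.e.\ on $E$.

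The first step is to replace $V$ by $W$ on $E$. We apply Lemma \ref{loc1} to $V-W\in H^{1,2}_H(T\X)$, which vanishes on $E$; this gives $\dive V=\dive W$ a.e.\ on $E$. We apply Lemma \ref{loc2} to $V-W\in H^{1,2}_C(T\X)$; this gives $\nabla V=\nabla W$ a.e.\ on $E$, so the traces agree as well. Hence $\mathscr{D}(V)=\mathscr{D}(W)$ a.e.\ on $E$.

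The second step is to use linearity and the Leibniz rules. For bounded $g\in S^2(\X)$ and $X\in H^{1,2}_H(T\X)\cap L^\infty(T\X)$ we have
\begin{equation}
    \dive(gX)=\nabla g\,\cdot\,X+g\dive X,\qquad \nabla(gX)=\nabla g\otimes X+g\nabla X .
\end{equation}
The first identity follows by testing against test functions; the second is \cite[Proposition 3.4.5]{Gigli14}-type calculus, justified through the approximation underlying Lemma \ref{remaininH}. Taking the trace of the second identity gives $\tr\nabla(gX)=\nabla g\,\cdot\,X+g\tr\nabla X$. The first-order terms $\nabla g\,\cdot\,X$ therefore cancel in the deficit, and we obtain
\begin{equation}
    \mathscr{D}(gX)=g\,\mathscr{D}(X).
\end{equation}
Summing over $i$ and using the linearity of $\tr\nabla$ and $\dive$ on $H^{1,2}_H(T\X)$ yields $\mathscr{D}(W)=\sum_i g_i\mathscr{D}(V_i)$, which is the claim.

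The main obstacle is rigorously justifying these Leibniz rules in the stated generality, since $g_i$ is only in $S^2\cap L^\infty$ and not a test function. The plan is to approximate $g_i$ by test functions (for instance via the heat flow, plus a cut-off to handle integrability) and $V_i$ by test vector fields in the $H^{1,2}_H$-norm. We then pass to the limit in the weak formulations using the closedness of $\nabla$ and $\dive$, exactly as in the proof of Lemma \ref{remaininH}. A secondary point is checking that the trace is well defined pointwise and agrees on $E$ with $\sum_j\nabla(\cdot)(V_j,V_j)$. This holds because $(V_j)_j$ is an orthonormal basis there, which is the same convention used in Proposition \ref{fdcsc}.
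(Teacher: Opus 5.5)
Your proposal is correct and follows the paper's proof essentially verbatim: replace $V$ by $\sum_i (V\cdot V_i)V_i$ on $E$ using the locality Lemmas \ref{loc1} and \ref{loc2}, then use the Leibniz rules to get $\mathscr{D}(gV_i)=g\,\mathscr{D}(V_i)$, and conclude by linearity. You also check that $V\cdot V_i$ is a bounded Sobolev function, a detail the paper leaves implicit; it even lies in $L^2$, since $|V_i|\le 1$ gives $|V\cdot V_i|\le |V|$, which matches the paper's use of $f\in H^{1,2}(\X)\cap L^\infty(\X)$.
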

\begin{proof}
    First, set $\tilde V\defeq \sum_{i=1}^n (V\,\cdot\, V_i) V_i$. Notice that by Proposition \ref{GS} and Lemma \ref{remaininH}, $\tilde V\in H^{1,2}_H(T\X)\cap L^\infty(T\X)$ and  $\tilde V=V$ $\haus^n$-a.e.\ on $E$. By locality, see Lemmas \ref{loc1} and \ref{loc2}, $\mathscr{D}(V)=\mathscr{D}(\tilde V)$ $\haus^n$-a.e.\ on $E$. Moreover, if $f\in H^{1,2}(\X)\cap L^\infty(\X)$,  the following holds $\haus^n$-a.e.\ on $E$:
    \begin{equation}
    \begin{split}
        \mathscr{D}(fV_i)&=\tr(\nabla(f V_i))-\dive(f V_i)=\tr (f\nabla V_i)+\tr(\nabla f\otimes V_i)- \nabla f\,\cdot\, V_i-f\dive V_i\\
        &=f\tr \nabla V_i+\nabla f\,\cdot\,V_i-\nabla f\,\cdot\, V_i-f\dive V_i=f\mathscr{D}(V_i).
    \end{split}
    \end{equation}
    The conclusion then follows from the linearity of $\mathscr{D}$.
\end{proof}

Now we define  the following functional:
\begin{equation}\label{defnt}
    T:\LIP(\X)^{n+1}\rightarrow\RR\qquad T(f,\pi_1,\dots,\pi_n)\defeq \int_E fd\pi_1\wedge\cdots\wedge d\pi_{n}\,\cdot\, \omega \de \haus^n.
\end{equation}
It follows from \cite[Proposition 6.2]{BBPorient} (see also \cite[Equation (6.2)]{BBPorient}) that $T$ is a metric current satisfying
\begin{equation}\label{massofT}
    |T|=\haus^n\mres E.
\end{equation}
To be more precise, \cite{BBPorient} deals with finite-dimensional $\RCD$ spaces; however, it is clear  from the proof that the results used above extend to  infinite-dimensional   $\RCD$ spaces, under the additional assumption that the tangent module has constant  dimension $n$.

Now, let $f,\pi_1,\dots,\pi_{n-1}\in\Test(\X)$. Proposition \ref{fdcsc} associates to
\begin{equation}\label{aaavefdcsc}
    \eta\defeq fd\pi_1\wedge\cdots\wedge d\pi_{n-1}
\end{equation} the vector field $\alpha_\eta\in H^{1,2}_H(T\X)$. By Proposition \ref{fdcsc}, $d\eta=\tr(\nabla\alpha_\eta)\omega\ \haus^n$-a.e.\ on $E$. We can then compute
     \begin{equation}\label{partialT}
    \begin{split}
        \partial T(f,\pi_1,\dots,\pi_{n-1})&=T(1,f,\pi_1,\dots,\pi_{n-1})=\int_E d f\wedge d\pi_1\wedge\cdots\wedge d\pi_{n-1}\,\cdot\,\omega\de\haus^n\\
        &=\int_E d\eta\,\cdot\,\omega\de\haus^n=\int_E \tr(\nabla\alpha_\eta)\omega\,\cdot\,\omega \de\haus^n= \int_E \tr \nabla\alpha_\eta \de\haus^n\\
        &= \int_E \mathscr{D}( \alpha_\eta)\de\haus^n+\int_E \dive \alpha_\eta \de\haus^n.
        \end{split}
\end{equation}

The computation above is the main point of the proof of the main result. It shows that the boundary of $T$ is the sum of two terms: The first term involves the deficit $\mathscr{D}$, while the second  can be integrated by parts and can thus be estimated by an integral with respect to a perimeter. This will allow us to easily show that $\partial T$ has finite mass. 

Proposition \ref{Tint} below proves that $T$ is integer-rectifiable. Once this is established, the Ambrosio--Kirchheim boundary-rectifiability theorem (Theorem \ref{boundrect}) will imply that the  first term on the right-hand side of \eqref{partialT} must vanish, see  Proposition \ref{vcdscscd} below.

\begin{prop}\label{Tint}
    It holds that $T\in \mathbf{I}_n(\X)$.
\end{prop}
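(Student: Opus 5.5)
We need to show two things: that $T$ is integer-rectifiable, and that $T$ is normal, i.e.\ $\partial T$ has finite mass. By \eqref{massofT} we have $|T|=\haus^n\mres E=\chi_E\haus^n$, and $\chi_E$ takes values in $\mathbb N$. The space is $n$-rectifiable, infinitesimally Hilbertian, and supports the Poincaré inequality \eqref{poincare} by \cite{Rajala12-2}. Hence Proposition \ref{maincurrent} applies directly and gives that $T$ is integer-rectifiable. It therefore remains to prove $\partial T\in$ metric currents of finite mass, i.e.\ to find a finite measure $\mu$ with
\begin{equation*}
|\partial T(f,\pi_1,\dots,\pi_{n-1})|\le \prod_{i}\Lip(\pi_i)\int|f|\de\mu .
\end{equation*}

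We start from the computation \eqref{partialT}, valid for $f,\pi_i\in\Test(\X)$, and bound its two terms separately.

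\textbf{First term.} Proposition \ref{vfdssc} gives $\mathscr D(\alpha_\eta)=\sum_i(\alpha_\eta\cdot V_i)\mathscr D(V_i)$ on $E$. From \eqref{defalpha}, $|\alpha_\eta\cdot V_i|\le |f|\prod_j|\nabla\pi_j|$. Each $\mathscr D(V_i)\in L^2$, and $E$ is bounded, so $\sum_i|\mathscr D(V_i)|\chi_E\in L^1$ (after multiplying by a cutoff, or using $\haus^n(E)<\infty$). This bounds the first term by $\prod\Lip(\pi_j)\int_E|f|\,g\de\haus^n$ with $g\in L^1$.

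\textbf{Second term.} We integrate $\int_E\dive\alpha_\eta$ by parts against the perimeter. The natural tool is the Gauss--Green type inequality: for a bounded vector field $X$ in $H^{1,2}_H$,
\begin{equation*}
\Big|\int_E\dive X\Big|\le\int\|X\|\de|D\chi_E|,
\end{equation*}
with a suitable pointwise upper representative of $|X|$. I would prove this by approximating $\chi_E$ with $T_\eps({\rm h}_t\chi_E)$ as in Lemma \ref{persing}. Then $-\int\nabla({\rm h}_t\chi_E)\cdot X$ is bounded using the Bakry--Émery estimate \eqref{eq:Bakry_Emery_RCD}, which yields $\le \|X\|_{L^\infty}e^{-Kt}|D\chi_E|(\X)$. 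The issue is that this gives a bound with $\|f\|_\infty$ rather than $\int|f|\de\mu$. To fix this, I would replace $\|X\|_\infty$ by the pointwise bound $|\alpha_\eta|\le |f|\prod_j\Lip(\pi_j)$, together with continuity of $f$. Passing to the limit in $\int {\rm h}_t(|f|)\de|D\chi_E|$, or using ${\rm h}_t|D\chi_E|\rightharpoonup|D\chi_E|$ and the Lipschitz continuity of $f$, gives a bound by $\prod\Lip(\pi_j)\int|f|\de|D\chi_E|$ up to the $L^1$ term above.

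Combining the two estimates gives finite mass with $\mu= g\haus^n\mres E+C|D\chi_E|$. We then extend from test functions to Lipschitz ones by the continuity axioms of currents, approximating Lipschitz functions by test functions via heat flow, with pointwise convergence and uniform Lipschitz bounds in the relevant sense (as in \cite{BBPorient}). This yields $T\in\mathbf N_n$, hence $T\in\mathbf I_n(\X)$.

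The main obstacle is the Gauss--Green step. We need the bound to be localized in $f$, with $|f|$ integrated against $|D\chi_E|$, not merely $\|f\|_\infty$. This means the heat-flow approximation has to be tracked carefully, so that ${\rm h}_t$ acting on the perimeter measure converges appropriately against the continuous weight $|f|$. A second concern is the extension of the bound from test to Lipschitz functions.
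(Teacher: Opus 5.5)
Your proposal is correct and follows the paper's proof. Integer-rectifiability comes from Proposition~\ref{maincurrent} with $|T|=\haus^n\mres E$, and finite mass of $\partial T$ comes from splitting \eqref{partialT} into the deficit term, bounded via Proposition~\ref{vfdssc} by $\int_E|f|\sum_i|\mathscr{D}(V_i)|\de\haus^n$, and the divergence term, bounded by $\int|f|\de|D\chi_E|$, followed by extension to Lipschitz data. The only difference is that the paper quotes the Gauss--Green formula of \cite{BGBV}, whereas you derive the inequality from $\int{\rm h}_t\chi_E\,\dive\alpha_\eta\de\haus^n=-\int\nabla{\rm h}_t\chi_E\cdot\alpha_\eta\de\haus^n$, the Bakry--\'Emery bound, and the narrow convergence ${\rm h}_t|D\chi_E|\to|D\chi_E|$ tested against the bounded continuous $|f|$; this works (the truncation $T_\eps$ is not even needed), but since this integral also sees $\alpha_\eta$ outside $E$ you must use the global bound $|\alpha_\eta|\le n|f|\Lip(\pi_1)\cdots\Lip(\pi_{n-1})$, as the paper does, not the bound without the factor $n$.
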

\begin{proof}
    We first prove that $T\in\mathbf{N}_n(\X)$. Take $f,\pi_1,\dots,\pi_{n-1}\in\Test(\X)$. We want to estimate the boundary $\partial T(f,\pi_1,\dots,\pi_{n-1})$.   Let $\eta$ and $\alpha_\eta$ be as above.
    By the definition of $\alpha_\eta$ in Proposition \ref{fdcsc},  we have the rough  (on $E$, a more precise estimate is available) estimate
    \begin{equation}
        |\alpha_\eta|\le n |f|\Lip(\pi_1)\cdots\Lip(\pi_{n-1})\qquad\haus^n\text{-a.e.}
    \end{equation}
    Now we estimate the two terms on the right-hand side of \eqref{partialT} separately. 
    For the first term, by Proposition \ref{vfdssc},
    \begin{equation}
        \begin{split}
            \Big| \int_E \mathscr{D}( \alpha_\eta)\de\haus^n   \Big|&\le \int_E \sum_{i=1}^n |\alpha_\eta\,\cdot\, V_i| |\mathscr{D}(V_i)|\de\haus^n \\
            &\le  n \Lip(\pi_1)\cdots\Lip(\pi_{n-1})\int_E |f|\sum_{i=1}^n |\mathscr{D}(V_i)|\de\haus^n.
        \end{split}
    \end{equation}
    For the second term, integrating by parts (\cite{BGBV}, after \cite{bru2019rectifiability}),
    \begin{equation}\label{inteparts}
        \Big|\int_E \dive\alpha_\eta\de\haus^n\Big|=\Big|\int\alpha_\eta\,\cdot\,\nu_E\de|D\chi_E|\Big|\le n\Lip(\pi_1)\cdots\Lip(\pi_{n-1})\int|f|\de|D\chi_E|,
    \end{equation}
    where we used that $|f|$ is continuous when taking the trace of $|\alpha_\eta|$ for the measure $|D\chi_E|$.
    
    All in all
    \begin{equation}
        \big|\partial T(f,\pi_1,\dots,\pi_{n-1})\big|\le   n\Lip(\pi_1)\cdots\Lip(\pi_{n-1})\Big(\int_E |f|\sum_{i=1}^n |\mathscr{D}(V_i)|\de\haus^n+\int|f|\de|D\chi_E|\Big).
    \end{equation}
    By the continuity axioms for currents and the regularizing properties of the heat flow on $\RCD(K,\infty)$ spaces,  the above continues to hold whenever $f,\pi_1,\dots,\pi_{n-1}\in\LIP(\X)$. 
    This means that $\partial T$ has finite mass, more precisely,
    \begin{equation}
        |\partial T|\le n\Big(\sum_{i=1}^n |\mathscr{D}(V_i)|\haus^n\mres E+|D\chi_E|\Big).
    \end{equation}
    In passing, we notice that \cite[Theorem 8.7]{AmbrosioKirchheim00} would imply that $E$ must be $n$-rectifiable, even without using the rectifiability of $\X$ at this stage of the proof (cf.\ Remark \ref{rectused}).

    Finally, recall that by Proposition \ref{maincurrent} with \eqref{massofT}, $T$ is also integer-rectifiable.
    \end{proof}
    
    \begin{prop}\label{vcdscscd}
        It holds that $\mathscr{D}(V_i)= 0\ \haus^n$-a.e.\ on $E$, for every $i=1,\dots,n$.
    \end{prop}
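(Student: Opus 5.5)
Since $T\in\mathbf{I}_n(\X)$ by Proposition \ref{Tint}, Theorem \ref{boundrect} gives $\partial T\in\mathbf{I}_{n-1}(\X)$. In particular $|\partial T|$ is concentrated on an $(n-1)$-rectifiable set and vanishes on $\haus^{n-1}$-null sets. Any $(n-1)$-rectifiable set $S$ satisfies $\haus^n(S)=0$, since it is $\sigma$-finite for $\haus^{n-1}$. Hence $|\partial T|\perp\haus^n$.

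Now I decompose $\partial T$ using \eqref{partialT}. For $f,\pi_1,\dots,\pi_{n-1}\in\Test(\X)$, write $\partial T=A+B$, where
\begin{equation*}
A(f,\pi)\defeq\int_E \mathscr{D}(\alpha_\eta)\de\haus^n,\qquad B(f,\pi)\defeq\int_E \dive\alpha_\eta\de\haus^n=-\int \alpha_\eta\cdot\nu_E\de|D\chi_E|.
\end{equation*}
The sign convention in the Gauss--Green formula is irrelevant here. By Proposition \ref{vfdssc} and the definition \eqref{defalpha} of $\alpha_\eta$, on $E$ we have
\begin{equation*}
\mathscr{D}(\alpha_\eta)=\sum_i(-1)^{i-1}\big(f\,d\pi_1\wedge\cdots\wedge d\pi_{n-1}\cdot\hat\omega_i\big)\mathscr{D}(V_i).
\end{equation*}
So $A$ is the functional $f\,d\pi_1\wedge\cdots\wedge d\pi_{n-1}\mapsto\int_E f\,d\pi\cdot \beta\de\haus^n$, where $\beta\defeq\sum_i(-1)^{i-1}\mathscr{D}(V_i)\hat\omega_i\in L^1(\Lambda^{n-1}T\X\mres E)$. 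The functional $B$ is bounded by $n\Lip(\pi_1)\cdots\Lip(\pi_{n-1})\int|f|\de|D\chi_E|$, and by Lemma \ref{persing} it is concentrated on an $\haus^n$-null set.

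The key step is a localization argument. Let $N$ be an $\haus^n$-null Borel set carrying both $|\partial T|$ and $|D\chi_E|$. Take $f$ supported near a compact $C\subseteq E\setminus N$, or more precisely $f=\psi_k$ with $\psi_k$ Lipschitz cutoffs converging to $\chi_C$ and $|D\chi_E|(\{\psi_k\neq0\}\setminus C)\to0$, together with arbitrary $\pi_j\in\Test(\X)$. By the mass estimate, $\partial T(\psi_k,\pi)\to0$ and $B(\psi_k,\pi)\to0$. Hence $A(\psi_k,\pi)\to0$, which gives $\int_C d\pi_1\wedge\cdots\wedge d\pi_{n-1}\cdot\beta\de\haus^n=0$. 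Replacing $C$ by $C\cap\{g\ge t\}$ style sets, or multiplying by a general Lipschitz $g$, gives $\int_C g\,d\pi\cdot\beta=0$ for all Lipschitz $g$ and all compacts $C$. Therefore $d\pi_1\wedge\cdots\wedge d\pi_{n-1}\cdot\beta=0$ $\haus^n$-a.e.\ on $E$ for every choice of $\pi$. The statement that $\partial T$ is supported on $N$ implies $\partial T(g\chi_C,\dots)=0$ for the extension of currents to bounded Borel first entries, which makes this step clean.

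Finally, since $L^2(T\X)$ has dimension $n$, countably many choices of $(\pi_j)$ drawn from the $u_i$'s give $d\pi_1\wedge\cdots\wedge d\pi_{n-1}$ spanning $\Lambda^{n-1}$ on $E$. By density, forms of type $d\pi_1\wedge\cdots\wedge d\pi_{n-1}$ generate $L^2(\Lambda^{n-1}T\X)$ as a module. It follows that $\beta=0$ a.e.\ on $E$. Since the $\hat\omega_i$ form a basis there, $\mathscr{D}(V_i)=0$ $\haus^n$-a.e.\ on $E$.

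The main obstacle is justifying the localization rigorously. This requires extending $\partial T$, $A$ and $B$ to Borel (or merely Lipschitz-approximated) first arguments, and checking that $A$ is absolutely continuous with respect to $\haus^n$, which holds because $\beta\in L^1(E)$ with $|\mathscr{D}(V_i)|\in L^1$ on the bounded set $E$. The identity $\partial T=A+B$ must also be shown to pass from test functions to Lipschitz functions, via the heat flow argument already used in Proposition \ref{Tint}.
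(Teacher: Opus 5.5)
Your proposal is correct and follows essentially the same route as the paper. Boundary rectifiability (Theorem \ref{boundrect}) makes $|\partial T|$ singular with respect to $\haus^n$, and Lemma \ref{persing} makes the perimeter term singular too, so the absolutely continuous deficit term $\int_E \eta\cdot\beta\de\haus^n$ (obtained via Proposition \ref{vfdssc}) must vanish after localization; the basis $\hat\omega_i$ then gives $\mathscr{D}(V_i)=0$. Your localization with cutoffs around compacts $C\subseteq E\setminus N$ is just an explicit version of the paper's step of multiplying $f$ by an arbitrary $g\in\Test(\X)$ and then ``an approximation argument''.
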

\begin{proof} 
    Fix for the moment $\pi_1,\dots,\pi_{n-1}\in\Test(\X)$. For any $f\in\Test(\X)$, we consider $\eta=fd\pi_1\wedge\cdots\wedge d\pi_{n-1}$ as in  \eqref{aaavefdcsc}. 
    Proposition \ref{fdcsc} gives a vector field $\alpha_\eta \in H^{1,2}_H(T\X)\cap L^\infty(T\X)$. 
    
      By Proposition \ref{Tint}, we can apply Theorem \ref{boundrect} and obtain that $\partial T\in\mathbf{I}_{n-1}(\X)$. 
    Hence, $|\partial T|$ is concentrated on a countably $\haus^{n-1}$-rectifiable set and vanishes on $\haus^{n-1}$-negligible sets. 
    In particular, there exists a Borel set $B\subseteq \X$ with $\haus^n(B)=0$ such that $|\partial T|=|\partial T|\mres B$, hence
    \begin{equation}
        |\partial T(f,\pi_1,\dots,\pi_{n-1})|\le \Lip(\pi_1)\cdots\Lip(\pi_{n-1})\int_{B} |f|\de |\partial T|.
    \end{equation}
    By \eqref{partialT} and the integration by parts formula recalled in \eqref{inteparts}, we see that
    \begin{equation}
        \Big|\int_E \mathscr{D}(\alpha_{ \eta})\de \haus^n\Big|\le \Lip(\pi_1)\cdots\Lip(\pi_{n-1})\Big(n\int|f|\de |D\chi_E|+\int_{B} |f|\de |\partial T|\Big).
    \end{equation}
    Now recall that $\Test(\X)$ is an algebra, so that, for any $g\in\Test(\X)$, we can replace $f$ by $gf$ in the argument above. In particular, using also  Proposition \ref{vfdssc} and recalling  \eqref{defalpha}, we see that 
\begin{equation}
        \Big|\int_E g\mathscr{D}(\alpha_{ \eta})\de \haus^n\Big|\le \Lip(\pi_1)\cdots\Lip(\pi_{n-1})\Big(n\int|fg|\de |D\chi_E|+\int_{B} |fg|\de |\partial T|\Big).
    \end{equation}
   Since $g\in\Test(\X)$  is arbitrary,  as $B$ is $\haus^n$-null and $|D\chi_E|\perp\haus^n$ (by Lemma \ref{persing}), an approximation argument yields that
    \begin{equation}
      \mathscr   D(\alpha_{\eta})=0\qquad\haus^n\text{-a.e.\ on }E.
    \end{equation}
    Using again Proposition \ref{vfdssc} and \eqref{defalpha}, being  $f\in\Test(\X)$ arbitrary, we see that 
    \begin{equation}
      \sum_{i=1}^n(-1)^{i-1}\big( d\pi_1\wedge\cdots\wedge d\pi_{n-1}\,\cdot\,V_1\wedge\cdots \wedge \hat V_i \wedge\cdots \wedge V_n\big)\mathscr{D}(V_i)=0  \qquad\haus^n\text{-a.e.\ on }E.
    \end{equation}
    As $\pi_1,\dots,\pi_{n-1}\in\Test(\X)$ were arbitrary, by a density argument we see that the above implies
    \begin{equation}
        \mathscr{D}(V_i)=0  \qquad\haus^n\text{-a.e.\ on }E
    \end{equation}
    for every $i=1,\dots,n$.
\end{proof}

\begin{proof}[Proof of Theorem \ref{intermediate}]
    Proposition \ref{vcdscscd} implies  that $\mathscr{D}(V_{i})=0\ \haus^n$-a.e.\ on $E$, for every $i=1,\dots,n$.
By the definition of $V_{i}$ in  Proposition \ref{GS}, i.e., \eqref{vfdscs0},  we see that $V_{1}=\frac{\nabla u_1}{|\nabla u_1|\vee \eta}$, for an appropriate $\eta>0$. Proposition \ref{vfdssc} then implies that $\mathscr{D}(\nabla u_1)=0\ \haus^n$-a.e.\ on $E$. This, of course, holds for the other indices as well.
\end{proof}

\end{document}